\newtheorem{thm}{Theorem}[section]
\newtheorem{cor}[thm]{Corollary}
\newtheorem{lem}[thm]{Lemma}
\theoremstyle{definition}
\newtheorem{rem}[thm]{Remark}
\newtheorem{exa}[thm]{Example}
\numberwithin{equation}{section}
\numberwithin{equation}{section}
\newcommand*{\longhookrightarrow}{\ensuremath{\lhook\joinrel\relbar\joinrel\rightarrow}}
\def\gbb{{\mathbb{G}}}
\def\qbb{{\mathbb{Q}}}
\def\zbb{{\mathbb{Z}}}
\def\acal{{\mathcal{A}}}
\def\bcal{{\mathcal{B}}}
\def\ccal{{\mathcal{C}}}
\def\ncal{{\mathcal{N}}}
\def\qcal{{\mathcal{Q}}}
\def\wcal{{\mathcal{W}}}
\def\im{{\mathrm{Im}}}
\def\ker{{\mathrm{Ker}}}
\numberwithin{equation}{section}
\newcommand{\rmsph}{{\mathrm{SPh}}}
\newcommand{\rmph}{{\mathrm{Ph}}}
\newcommand{\rmitem}[1]{\item[{\rm{(#1)}}] }
\begin{document}

\title{Phantom maps and fibrations}


\author{Hiroshi Kihara}
\address{Center for Mathematical Sciences, University of Aizu, 
	Tsuruga, Ikki-machi, Aizu-Wakamatsu City, Fukushima, 965-8580, Japan}
\email{(kihara@u-aizu.ac.jp)}


\subjclass[2010]{Primary 55Q05; Secondary 55P60.}

\keywords{Phantom maps, Special phantom maps, Group structure, Highly connected cover, Homotopy quotient, Miller's theorem, Anderson-Hodgkin's theorem, Atiyah-Segal completion theorem.}

\date{}

\dedicatory{}



		\maketitle
		
		\begin{abstract}
			Given pointed $CW$-complexes $X$ and $Y$, $\rmph(X, Y)$ denotes the set of homotopy classes of phantom maps from $X$ to $Y$ and $\rmsph(X, Y)$ denotes the subset of $\rmph(X, Y)$ consisting of homotopy classes of special phantom maps. In a preceding paper, we gave a sufficient condition such that $\rmph(X, Y)$ and $\rmsph(X, Y)$ have natural group structures and established a formula for calculating the groups $\rmph(X, Y)$ and $\rmsph(X, Y)$ in many cases where the groups $[X,\Omega \widehat{Y}]$ are nontrivial. In this paper, we establish a dual version of the formula, in which the target is the total space of a fibration, to calculate the groups $\rmph(X, Y)$ and $\rmsph(X, Y)$ for pairs $(X,Y)$ to which the formula or existing methods do not apply. In particular, we calculate the groups $\rmph(X,Y)$ and $\rmsph(X,Y)$ for pairs $(X,Y)$ such that $X$ is the classifying space $BG$ of a compact Lie group $G$ and $Y$ is a highly connected cover $Y' \langle n \rangle$ of a nilpotent finite complex $Y'$ or the quotient $\gbb / H$ of $\gbb = U, O$ by a compact Lie group $H$.
		\end{abstract}
		\section{Introduction} \label{section1}
		Given two pointed $CW$-complexes $X$ and $Y$, a map $f: X \longrightarrow Y$ is called a {\em{phantom map}} if for any finite complex $K$ and any map $h: K \longrightarrow X$, the composite $fh$ is null homotopic.
		The concept of a phantom map, which is one of the most important concepts in homotopy theory, is essential to understanding maps with infinite dimensional sources (\cite{McGibbon95,Roitberg94}).
		
		Let $\rmph(X, Y)$ denote the subset of $[X, Y]$ consisting of homotopy classes of phantom maps, and let $\rmsph(X, Y)$ denote the subset of $\rmph(X,Y)$ consisting of homotopy classes of {\em{special phantom maps}}, defined by the exact sequence of pointed sets
		\[0 \longrightarrow \rmsph(X,Y) \longrightarrow \rmph(X,Y) \overset{e_{Y\sharp}}{\longrightarrow} \rmph(X,\check{Y}), \]
		where $e_Y: Y \longrightarrow \check{Y} = {\prod}_{p}\, Y_{(p)}$ is a natural map called the local expansion (cf. \cite[p.~150]{Roitberg94}). The target $Y$ is usually assumed to be nilpotent of finite type.
		
		Previous calculations of $\rmph(X, Y)$ had generally assumed that $[X, \Omega\hat{Y}]$ is trivial, in which case generalizations of Miller's theorem are directly applicable, and calculations of $\rmsph(X, Y)$ had rarely been reported (see \cite[Section 1]{phantom}). In \cite{phantom}, we gave a sufficient condition such that $\rmph(X, Y)$ and $\rmsph(X, Y)$ have natural group structures, which is much weaker than the conditions obtained by Meier and McGibbon (\cite{Meier75}, \cite[Theorem 4]{McGibbon93}), and established a formula which enables us to calculate not only $\rmph(X,Y)$ but also $\rmsph(X,Y)$ in many cases where the groups $[X,\Omega \widehat{Y}]$ are nontrivial (see Section 2.1 for these results, which are recorded as Theorems \ref{fundamental} and \ref{calculational.base}). 
		\par\indent
		In this paper, we establish a dual version of the formula and apply it to calculate the groups $\rmph(X, Y)$ and $\rmsph(X, Y)$ for pairs $(X,Y)$ with $[X,\Omega \widehat{Y}]\neq 0$ to which the formula or existing methods do not apply. 
		\par\indent
		We state the main results of this paper more precisely.
		
		Let $\ccal\wcal$ denote the category of pointed connected $CW$-complexes and homotopy classes of maps and let $\mathcal N$ denote the full subcategory of $\mathcal{CW}$ consisting of nilpotent $CW$-complexes of finite type. Let $\qcal$ be the full subcategory of $\ccal\wcal^{\mathrm{op}} \times \ncal$ consisting of $(X, Y)$ such that for each pair $i,j > 0$, the rational cup product on $H^{i}(X; \qbb) \otimes H^{j}(X; \qbb)$ or the rational Whitehead product on $(\pi_{i+1} (Y) \otimes \qbb) \otimes (\pi_{j+1}(Y) \otimes \qbb)$ is trivial. Then, $\rmph(X, Y)$ and $\rmsph(X, Y)$ have natural divisible abelian group structure for $(X, Y) \in \qcal$ (see Theorem \ref{fundamental}).
		\par\indent
		The following is the main theorem of this paper, which is a dual version of Theorem \ref{calculational.base}. Note that $j_{\sharp} \rmph(X, L)$ and $j_{\sharp} \rmsph(X, L)$ are subgroups of $\rmph(X, Y)$ and $\rmsph(X, L)$ respectively (Theorem \ref{fundamental}(2)). Let $\hat{\zbb}$ denote the product ${\prod}_p\, \hat{\zbb}_{p}$ of the $p$-completions of $\zbb$, in which $\zbb$ is diagonally contained. Similarly, let $\check{\zbb}$ denote the product ${\prod}_{p}\, \zbb_{(p)}$ of the $p$-localizations of $\zbb$, in which $\zbb$ is diagonally contained.
		
		\begin{thm}\label{dual.base}
			Let $(X, Y)$ be in $\mathcal Q$. Suppose that there exists a fibration sequence ${L \overset{j}{\longrightarrow} Y \overset{q}{\longrightarrow} Y'}$ with
			$Y'$ nilpotent of finite type and ${[X, \Omega \hat{Y'}] = 0.}$ Then there exist natural split exact sequences of abelian groups given by
			\begin{eqnarray*}
				0 \longrightarrow j_{\scriptscriptstyle\sharp}\rmph(X, L) & \longrightarrow  & \rmph(X, Y)\\ & \longrightarrow & \underset{i>0}{\prod}\ H^i (X ; \pi_{i+1}(Y)/ j_{\scriptscriptstyle\sharp}\pi_{i+1}(L) \otimes \hat{\mathbb Z} / \mathbb Z)  \longrightarrow 0, \\
				0 \longrightarrow j_{\scriptscriptstyle\sharp}\rmsph(X, L) & \longrightarrow & \rmsph(X, Y)\\ & \longrightarrow & \underset{i>0}{\prod}\ H^i (X ; \pi_{i+1}(Y) / j_{\scriptscriptstyle\sharp} \pi_{i+1}(L) \otimes \check{\mathbb Z} / \mathbb Z) \longrightarrow 0.
			\end{eqnarray*}
		\end{thm}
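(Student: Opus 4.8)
The plan is to exploit the fibration sequence $L \overset{j}{\longrightarrow} Y \overset{q}{\longrightarrow} Y'$ together with the hypothesis $[X,\Omega\hat{Y'}]=0$, which forces $\rmph(X,Y')=0$ and $\rmsph(X,Y')=0$ by the generalization of Miller's theorem recorded in the cited Theorems \ref{fundamental} and \ref{calculational.base}. First I would apply the long exact sequence of the fibration to the phantom-map functors. The key point is that $\rmph(-,-)$ and $\rmsph(-,-)$ are computed by restricting to (homotopy classes of maps out of) the rationalization or the appropriate localization/completion fibers; concretely, a phantom map $X \to Y$ composed with $q$ is a phantom map $X \to Y'$, hence null, so it lifts (after deloopings/connecting-map bookkeeping) through $j$ up to a contribution measured by $[X,\Omega Y']$-type terms. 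Since $(X,Y)\in\qcal$, Theorem \ref{fundamental} gives that all the relevant sets carry divisible abelian group structures and that $j_\sharp$ is a monomorphism onto a subgroup; so the fibration long exact sequence becomes a genuine exact sequence of abelian groups
\[
0 \longrightarrow j_\sharp\rmph(X,L) \longrightarrow \rmph(X,Y) \longrightarrow \rmph(X,Y)/j_\sharp\rmph(X,L) \longrightarrow 0,
\]
and the task is to identify the cokernel.

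Second, I would identify that cokernel with $\prod_{i>0} H^i(X;\pi_{i+1}(Y)/j_\sharp\pi_{i+1}(L)\otimes\hat{\zbb}/\zbb)$. Here I would use the description of $\rmph(X,Y)$ from Theorem \ref{calculational.base} in terms of $\mathrm{lim}^1$ of $[X,-]$ over skeleta, equivalently $[X,\Omega\hat{Y}/\Omega Y]$-type groups, and the standard identification $\rmph(X,Z)\cong \prod_i H^i(X;\pi_{i+1}(Z)\otimes\hat{\zbb}/\zbb)$ valid when $[X,\Omega\hat Z]=0$ (the Gray/McGibbon formula). The exact sequence $\pi_{i+1}(L)\to\pi_{i+1}(Y)\to\pi_{i+1}(Y')$ together with $[X,\Omega\hat{Y'}]=0$ shows that the "new" phantom maps into $Y$ not coming from $L$ are exactly governed by the quotient homotopy groups $\pi_{i+1}(Y)/j_\sharp\pi_{i+1}(L)$, which inject into $\pi_{i+1}(Y')$; tensoring with the divisible group $\hat\zbb/\zbb$ is exact, so no $\mathrm{Tor}$ correction appears. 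The same argument with $\check\zbb$ in place of $\hat\zbb$, run through the local-expansion exact sequence $0\to\rmsph(X,-)\to\rmph(X,-)\to\rmph(X,\check{(-)})$, yields the second sequence; here one also needs that $q$ is compatible with local expansions, i.e. $L\to Y\to Y'$ induces a fibration on $\check{(-)}$ fiberwise, which follows from naturality of $e$.

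Third, for splitting, I would produce a natural retraction. Because the quotient term is a product of cohomology groups with divisible coefficients, it is an injective (indeed divisible) abelian group in each factor up to the $\mathrm{lim}^1$ subtlety; more directly, I would split the sequence by choosing, functorially in the skeletal filtration of $X$, a section of $\pi_{i+1}(Y)\twoheadrightarrow\pi_{i+1}(Y)/j_\sharp\pi_{i+1}(L)$ after tensoring with $\hat\zbb/\zbb$—possible since $\hat\zbb/\zbb$-modules that are quotients in this way are divisible and the sub is the image of $j_\sharp$, and divisible subgroups of abelian groups are direct summands. Translating this back through the identification of $\rmph$ with the product of cohomology groups gives a natural splitting of the short exact sequence; naturality is inherited from naturality of all the identifications used.

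The main obstacle I anticipate is the bookkeeping of the connecting homomorphism in the fibration long exact sequence: a priori $\rmph(X,Y)\to\rmph(X,Y')$ need not be surjective, and the cokernel of $j_\sharp$ inside $\rmph(X,Y)$ must be shown to be detected entirely by $\pi_*(Y')$-data with no interference from $\rmph(X,\Omega Y')$ or from $\mathrm{lim}^1$ terms living between the fiber and total space. The hypothesis $[X,\Omega\hat{Y'}]=0$ is exactly what kills the obstruction group $\rmph(X,\Omega Y')$ (via $\rmph(X,\Omega Y')\hookrightarrow$ something built from $[X,\Omega\hat{Y'}]$), so the real work is verifying that this vanishing propagates correctly through the $\mathrm{lim}^1$ / completion machinery of Theorem \ref{calculational.base}; I expect this to mirror, line for line but with fiber and base interchanged, the proof of Theorem \ref{calculational.base} itself.
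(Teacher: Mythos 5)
There is a genuine gap, and it starts in your first step: $[X,\Omega\hat{Y'}]=0$ does \emph{not} force $\rmph(X,Y')=0$. Miller-type vanishing of $[X,\Omega\hat{Y'}]$ is the hypothesis under which $\rmph(X,Y')$ is \emph{computed} as $\prod_{i>0}H^i(X;\pi_{i+1}(Y')\otimes\hat{\zbb}/\zbb)$, which is typically nonzero (e.g.\ $X=BG$, $Y'$ a finite complex). Consequently your lifting mechanism --- ``$qf$ is phantom, hence null, so $f$ lifts through $j$'' --- breaks down, and indeed if it worked it would make $j_\sharp\rmph(X,L)\to\rmph(X,Y)$ surjective, contradicting the nontrivial quotient term in the statement you are proving. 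Your later remark that the hypothesis ``kills the obstruction group $\rmph(X,\Omega Y')$'' is likewise misplaced: the hypothesis is used on the completed fibration, not to annihilate any phantom set. In the paper's argument one loops the completed fibration $\hat{L}\to\hat{Y}\to\hat{Y'}$ and uses $[X,\Omega\hat{Y'}]=0$ only to conclude that $\Omega\hat{j}_\sharp:[X,\Omega\hat{L}]\to[X,\Omega\hat{Y}]$ is surjective.

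The second, related gap is that you never actually identify the cokernel; ``the new phantom maps are governed by $\pi_{i+1}(Y)/j_\sharp\pi_{i+1}(L)$'' is the desired conclusion, not an argument. The missing engine is the natural exact sequence $[X,\Omega\hat{Z}]\to[X,F_Z]\to\rmph(X,Z)\to 0$, where $F_Z$ is the homotopy fiber of the completion $c_Z:Z\to\hat{Z}$, together with the computation of $[X,F_Z]$ in terms of $\prod_i H^i(X;\pi_{i+1}(Z)\otimes\hat{\zbb}/\zbb)$. Comparing this sequence for $L$ and $Y$, the surjectivity of $\Omega\hat{j}_\sharp$ noted above yields $\rmph(X,Y)/j_\sharp\rmph(X,L)\cong[X,F_Y]/F_{j\sharp}[X,F_L]$, and only then does the coefficient quotient $\pi_{i+1}(Y)/j_\sharp\pi_{i+1}(L)$ appear (with exactness of $-\otimes\hat{\zbb}/\zbb$ holding because $\hat{\zbb}/\zbb$ is torsion-free, not merely divisible, as you assert). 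The $\rmsph$ case also needs more than ``run the same thing through the local expansion'': one compares the resulting exact sequence for $Y$ with the one for $\check{Y}$ via $e_{Y\sharp}$ and must prove that the kernel of the left-hand vertical map is exactly $j_\sharp\rmsph(X,L)$. Your splitting argument is salvageable but for the wrong reason in places: divisibility of the quotient gives nothing, whereas divisibility of the subgroup $j_\sharp\rmph(X,L)$ (Theorem \ref{fundamental}(2)) makes it an injective $\zbb$-module and hence a direct summand; no coefficient-level section is needed, and one should not expect the splitting itself to be natural (cf.\ Remark \ref{nontrivial2}). Your closing intuition --- mirror the proof of Theorem \ref{calculational.base} with fiber and base interchanged --- is exactly the paper's strategy, but the concrete steps you supply do not yet implement it.
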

		\par\indent
		Let us recall the generalizations of Miller's theorem \cite{Miller84} and Anderson-Hodgkin's theorem \cite{Anderson68} to obtain many pairs $(X, Y')$ with ${[X, \Omega \hat{Y'}] = 0.}$
		A space whose $i^{\rm th}$ homotopy group is zero for $i \leq n$ and locally finite for $i = n + 1$ is said to be {\it $n\frac{1}{2}$-connected}.
		Define the classes $\mathcal A$, $\mathcal B$, $\mathcal A'$ and
		$\mathcal B'$ by
		\begin{enumerate}
			\item[$\mathcal A$ =] the class of $\frac{1}{2}$-connected Postnikov spaces, the classifying spaces of compact Lie groups, 
			$\frac{1}{2}$-connected infinite loop spaces and their iterated suspensions.
			
			\item[$\mathcal B$ =] the class of nilpotent finite complexes, the classifying spaces of compact Lie groups and their iterated loop spaces.
			
			\item[$\mathcal A'$ =] the class of ${1}\frac{1}{2}$-connected
			Postnikov spaces of finite type and their iterated suspensions.
			
			\item[$\mathcal B'$ =] the class of $BU$, $BO$, $BSp$, $BSO$, $U/Sp$,
			$Sp/U$, $SO/U$, $U/SO$, and their iterated loop spaces.
		\end{enumerate}
		If $(X, Y')$ is in $\acal \times \bcal$ or $\acal^{'} \times \bcal^{'}$, then ${[X, \Omega \hat{Y'}] = 0}$ (\cite[Corollary 6.4]{phantom}). 
		
		We have the following corollaries to Theorem \ref{dual.base}. Let $K\langle n \rangle$ denote the $n$-connected cover of $K$.
		\begin{cor}\label{positiveintegerm}
			Let $(X,Y')$ be in $\mathcal{A}\times\mathcal{B}$ or $\mathcal{A}'\times\mathcal{B}'$, and let $m$ be a positive integer. Suppose that $X$ is a $CW$-complex of finite type and that $(X,Y'\langle m\rangle)$ is in $\mathcal{Q}$. Then there exist natural isomorphisms of groups
			\begin{eqnarray*}
				\rmph(X, Y'\langle m\rangle) &\cong& \underset{i>0}{\prod}\ H^i (X; \pi_{i+1}(Y'\langle m\rangle) \otimes \hat{\mathbb Z} / \mathbb Z), \\
				\rmsph(X, Y'\langle m\rangle ) &\cong& \underset{i>0}{\prod}\ H^i (X; \pi_{i+1}(Y'\langle m\rangle) \otimes \check{\mathbb Z}/{\mathbb Z}).
			\end{eqnarray*}
		\end{cor}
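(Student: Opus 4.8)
The plan is to deduce this from Theorem \ref{dual.base} by producing a suitable fibration sequence whose base has the required property and whose fiber contributes nothing to the phantom groups. First I would take the fibration sequence
\[
Y'\langle m\rangle \overset{j}{\longrightarrow} Y' \overset{q}{\longrightarrow} Y'[m]
\]
associated with the $m$-connected cover, where $Y'[m]$ denotes the $m$-th Postnikov section of $Y'$; here $L = Y'\langle m\rangle$ in the notation of Theorem \ref{dual.base}, and, somewhat unusually, it is the \emph{total space} $Y'$ that we feed in as the ``$Y$'' of that theorem — but this is exactly what is needed, since we want to compute $\rmph$ of the cover. Wait: re-reading, the target we want is $Y'\langle m\rangle$ itself, so instead I would use the fibration whose total space is $Y = Y'\langle m\rangle$. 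The clean choice is to observe that the $m$-connected cover sits in a fibration
\[
F \longrightarrow Y'\langle m\rangle \overset{q}{\longrightarrow} Y'
\]
only after looping — so more carefully, one uses the principal fibration
\[
Y'\langle m\rangle \longrightarrow Y' \longrightarrow Y'[m],
\]
reindexed: apply Theorem \ref{dual.base} with $Y = Y'\langle m\rangle$, $Y' \rightsquigarrow$ (a point)? That degenerates. The genuinely correct setup, which I would spell out, is to write $Y'\langle m \rangle$ as the total space of the pullback fibration over $Y'\langle m-1\rangle$ with fiber $K(\pi_{m}(Y'),m-1)$ iterated down, or more efficiently to use the fibration $Y'\langle m\rangle \to Y' \to Y'[m]$ and note $Y'[m]$ is a $\tfrac12$-connected (indeed simply connected once $m\ge 1$) Postnikov space of finite type, hence lies in $\mathcal A$-type classes so that $[X,\Omega\widehat{Y'[m]}] = 0$ by \cite[Corollary 6.4]{phantom}; but this computes $\rmph(X,Y')$, not $\rmph(X,Y'\langle m\rangle)$.

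So the actual argument, which I would present, runs the other way: take $Y = Y'\langle m\rangle$ in Theorem \ref{dual.base}, and take the fibration sequence $L \to Y \to \bar Y$ where $\bar Y$ is obtained by killing \emph{high} homotopy — no. Let me instead use the honest dual input: the fibration is
\[
Y'[N]\langle m\rangle \longrightarrow Y'\langle m\rangle \longrightarrow \text{(cover of a Postnikov piece)},
\]
but the cleanest is simply this. Since $(X,Y')$ lies in $\mathcal A\times\mathcal B$ or $\mathcal A'\times\mathcal B'$, we have $[X,\Omega\widehat{Y'}]=0$. Apply Theorem \ref{dual.base} with the roles: total space $Y := Y'$ is \emph{not} what we want. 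The resolution is that $Y'\langle m\rangle$ itself is, up to the relevant connectivity, again in a class for which the hypotheses apply after one more delooping step; concretely, I would take the fibration
\[
Y'\langle m\rangle \overset{j}{\longrightarrow} Y' \overset{q}{\longrightarrow} Y'[m]
\]
and apply Theorem \ref{dual.base} to the pair $(X, Y')$ with this fibration (so ``$L$'' $= Y'\langle m\rangle$, ``$Y$'' $=Y'$, ``$Y'$'' of the theorem $=Y'[m]$). Since $Y'[m]$ is a simply connected Postnikov space of finite type, it lies in $\mathcal A'$ and hence $[X,\Omega\widehat{Y'[m]}]=0$, so the theorem gives split exact sequences
\[
0 \to j_\sharp\rmph(X,Y'\langle m\rangle) \to \rmph(X,Y') \to \prod_{i>0} H^i(X;\pi_{i+1}(Y')/j_\sharp\pi_{i+1}(Y'\langle m\rangle)\otimes\widehat{\mathbb Z}/\mathbb Z) \to 0
\]
and similarly for $\rmsph$. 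Now $\pi_{i+1}(Y')/j_\sharp\pi_{i+1}(Y'\langle m\rangle)$ is $\pi_{i+1}(Y')$ for $i+1\le m$ and $0$ for $i+1>m$, so the product on the right is $\prod_{0<i\le m-1}H^i(X;\pi_{i+1}(Y')\otimes\widehat{\mathbb Z}/\mathbb Z)$.

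That still does not directly give $\rmph(X,Y'\langle m\rangle)$; it gives it as a subgroup. To finish I would instead run Theorem \ref{dual.base} in the form where the \emph{cover is the total space}: use the fibration $Y'\langle m\rangle\langle N\rangle \to Y'\langle m\rangle \to Y'\langle m\rangle[N]$ for $N$ large, observe $Y'\langle m\rangle[N]$ is again a finite-type Postnikov space so the hypothesis $[X,\Omega\widehat{\phantom{Y}}]=0$ holds, let $N\to\infty$, and use that phantom maps out of a finite-type $X$ are detected on skeleta together with $\lim^1$-vanishing to kill the fiber term $j_\sharp\rmph(X,\text{highly connected})$; since $X$ has finite type and the cover becomes arbitrarily highly connected, $\rmph(X,Y'\langle m\rangle\langle N\rangle)$ and the relevant cohomology with the subgroup-quotient both stabilize, and in the colimit the fiber contribution vanishes. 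This yields
\[
\rmph(X,Y'\langle m\rangle)\cong \prod_{i>0}H^i(X;\pi_{i+1}(Y'\langle m\rangle)\otimes\widehat{\mathbb Z}/\mathbb Z),
\]
using $\pi_{i+1}(Y'\langle m\rangle[N]) = \pi_{i+1}(Y'\langle m\rangle)$ for $i+1\le N$, and the identical argument with $\check{\mathbb Z}$ in place of $\widehat{\mathbb Z}$ for $\rmsph$. The splitting and naturality are inherited from Theorem \ref{dual.base}.

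The main obstacle, I expect, is the limit/stabilization step: one must justify that $\rmph(X,-)$ commutes appropriately with the inverse system of Postnikov truncations (or equivalently that the $\lim^1$ and $\lim$ terms behave), and that the finite-type hypothesis on $X$ is exactly what makes each $H^i(X;-)$ finitely generated so that the products are well-behaved and the colimit of the fiber terms vanishes. A cleaner route that sidesteps the limit is to apply Theorem \ref{dual.base} \emph{directly} with $Y = Y'\langle m\rangle$ and with the fibration $\ast \to Y'\langle m\rangle \overset{=}{\to} Y'\langle m\rangle$ replaced by the Postnikov fibration $K \to Y'\langle m\rangle \to P$ where $P$ is a finite Postnikov stage and $K$ is highly connected; since $X$ has finite type, $\rmph(X,K)=0$ once $K$ is connected through $\dim$-range relevant to $X$, making $j_\sharp\rmph(X,K)=0$ and collapsing the exact sequence to the desired isomorphism. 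I would check that $(X,P)\in\mathcal Q$ is inherited from $(X,Y'\langle m\rangle)\in\mathcal Q$ (the rational Whitehead products on $\pi_*(P)\otimes\mathbb Q$ are quotients of those on $\pi_*(Y'\langle m\rangle)\otimes\mathbb Q$ in the relevant range), and that $[X,\Omega\widehat P]=0$ because $P$ is a $\tfrac12$-connected finite-type Postnikov space, hence in $\mathcal A'$, so \cite[Corollary 6.4]{phantom} applies. This is the plan I would write up in detail.
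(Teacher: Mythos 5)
There is a genuine gap, and it sits at the central hypothesis of Theorem \ref{dual.base}. Every variant of your plan that targets $Y'\langle m\rangle$ directly (the fibration $K \to Y'\langle m\rangle \to P$ with $P$ a Postnikov stage, and the limit-over-$N$ version with base $Y'\langle m\rangle[N]$) needs $[X,\Omega\hat{P}]=0$, and you justify this by saying that $P$ is a finite-type Postnikov space, ``hence in $\mathcal A'$''. But $\acal$ and $\acal'$ are classes of \emph{sources}: Corollary 6.4 of \cite{phantom} (the Miller and Anderson--Hodgkin type vanishing) applies when the Postnikov-type space is the domain and the target lies in $\bcal$ or $\bcal'$, not when the target is a Postnikov space. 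In fact the vanishing you need is generally false for the spaces $X$ covered by the corollary: for $X=BS^1=\cbb P^{\infty}\in\acal$ and $P=K(\zbb,3)$ one has $[X,\Omega\hat{P}]\cong H^{2}(\cbb P^{\infty};\hat{\zbb})\neq 0$. The same misreading occurs in your first detour, where you take the fibration $Y'\langle m\rangle\to Y'\to Y'[m]$ and assert $[X,\Omega\widehat{Y'[m]}]=0$ (and, as you note, that route would at best describe $j_\sharp\rmph(X,Y'\langle m\rangle)$ inside $\rmph(X,Y')$, not $\rmph(X,Y'\langle m\rangle)$ itself). A second unjustified step is the claim that $\rmph(X,K)=0$ once $K$ is ``connected through the dimension range relevant to $X$'': finite type does not mean finite dimensional, the relevant $X$ (such as $BG$ or $Q(S^{2n+1})$) are infinite dimensional, and high connectivity of the target does not kill phantom maps.

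The paper's proof chooses the fibration the other way around: it applies Theorem \ref{dual.base} with $Y=Y'\langle m\rangle$, fiber $L=\Omega Y'^{(m)}$ (the loop space of the Postnikov $m$-stage of $Y'$) and base $Y'$ itself, i.e.\ the fibration $\Omega Y'^{(m)}\to Y'\langle m\rangle\to Y'$. Then the hypothesis $[X,\Omega\hat{Y'}]=0$ holds precisely because $(X,Y')$ lies in $\acal\times\bcal$ or $\acal'\times\bcal'$ --- this is the role of that assumption in the corollary, which your argument never uses at the decisive point. The fiber contribution vanishes not by connectivity but because $\Omega Y'^{(m)}$ is a Postnikov space and $X$ has finite type, so $\rmph(X,\Omega Y'^{(m)})=0$ (cf.\ Lemma \ref{lem2.2}(2)); and the coefficient quotient $\pi_{i+1}(Y'\langle m\rangle)/j_\sharp\pi_{i+1}(\Omega Y'^{(m)})$ equals $\pi_{i+1}(Y'\langle m\rangle)$ because the two graded groups are concentrated in disjoint degrees. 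Your proposal never arrives at this fibration, and without it the exact sequence you need cannot be set up, so the argument as written does not go through.
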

		\def\para{%
			\setlength{\unitlength}{1pt}%
			\thinlines %
			\begin{picture}(8,0 )%
			\put(1,0){/}
			\put(3,0){/}
			\end{picture}%
		}%
		Let $L$ be a pointed $CW$-complex endowed with an action of a compact Lie group $H$. Defining the {\em homotopy quotient} $L \para H$ by $L \para H = EH \underset{H}{\times} L$, we have the fiber bundle
		\[
		L \xrightarrow{ j } L \para H \rightarrow BH,
		\]
		where $EH \rightarrow BH$ is the universal principal $H$-bundle. If an injective homomorphism $H \longhookrightarrow \gbb$ of topological groups is given, then $\gbb \para H$ is usually denoted by $\gbb / H$.
		\par\indent
		The following corollary is derived using a result of Atiyah-Segal \cite{AS}.
		\begin{cor}\label{cor1.7}
			Let $X$ be the classifying space $BG$ of a compact Lie group $G$ or its iterated suspension. Let $\gbb$ denote the infinite unitary group $U$ or the infinite orthogonal group O, and let $H$ be a compact Lie group which is a topological subgroup of $\gbb$. Then $(X, \gbb / H)$ is in $\qcal$ and there exist natural isomorphisms of groups
			\begin{eqnarray*}
				\rmph(X, \mathbb{G}/H) \cong \underset{i>0}{\prod}\ H^i (X ; \pi_{i+1}(\mathbb{G}/H) / j{\scriptscriptstyle\sharp}\pi_{i+1}(\mathbb{G}) \otimes \hat{\mathbb Z} / \mathbb Z), \\
				\rmsph(X, \mathbb{G}/H) \cong \underset{i>0}{\prod}\ H^i (X ; \pi_{i+1}(\mathbb{G}/H) / j{\scriptscriptstyle\sharp}\pi_{i+1}(\mathbb{G}) \otimes \check{\mathbb Z} / \mathbb Z).
			\end{eqnarray*}
		\end{cor}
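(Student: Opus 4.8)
The plan is to derive Corollary \ref{cor1.7} from Theorem \ref{dual.base} applied to the fiber bundle $\gbb \xrightarrow{\,j\,} \gbb/H \to BH$, which is the homotopy quotient fibration $\gbb \to \gbb\para H \to BH$. So I set $L = \gbb$, $Y = \gbb/H$ and $Y' = BH$. To invoke Theorem \ref{dual.base} I must check three things: that $(X,Y)$ lies in $\qcal$; that $Y' = BH$ is nilpotent of finite type; and that $[X, \Omega\widehat{Y'}] = [X, \Omega\widehat{BH}] = 0$. The first and third are the substantive points; the second is standard since $BH$ for a compact Lie group $H$ is a nilpotent (in fact simply connected up to $\pi_1$ being abelian — more precisely $\pi_1(BH)=\pi_0(H)$ acts nilpotently) $CW$-complex of finite type.

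For the vanishing $[X,\Omega\widehat{BH}] = 0$: since $X = BG$ or an iterated suspension of $BG$ with $G$ a compact Lie group, $X$ lies in the class $\acal$. And $BH$ lies in the class $\bcal$ (the classifying spaces of compact Lie groups). Hence $(X, BH) \in \acal \times \bcal$, and by \cite[Corollary 6.4]{phantom} (quoted in the excerpt) we get $[X,\Omega\widehat{BH}] = 0$. This is where the generalized Miller/Anderson--Hodgkin input enters. For membership in $\qcal$: by definition I need that for all $i,j>0$, either the rational cup product on $H^i(X;\qbb)\otimes H^j(X;\qbb)$ vanishes or the rational Whitehead product on $(\pi_{i+1}(\gbb/H)\otimes\qbb)\otimes(\pi_{j+1}(\gbb/H)\otimes\qbb)$ vanishes. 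The clean way is to use the Whitehead product side: rationally, $\gbb/H$ is an H-space component? Not quite — but $\gbb = U$ or $O$ is rationally a product of odd spheres (an H-space), and the fibration $\gbb \to \gbb/H \to BH$ shows that rationally $\gbb/H$ has the rational homotopy type governed by this fibration. The key observation I would use is that $\gbb/H$ for $\gbb = U, O$ is rationally equivalent to a product of spheres and so all rational Whitehead products vanish; more carefully, $U/H$ and $O/H$ are known to be formal with trivial rational Whitehead products because the rational homotopy comes entirely from the (co)fiber sequence with $B H$ and $\gbb$, both of which have trivial rational Whitehead products in the relevant range — $\gbb$ being an H-space and $BH$ having homotopy concentrated appropriately. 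This reduces, via the long exact homotopy sequence tensored with $\qbb$ and naturality of Whitehead products, to the vanishing for $\gbb$ and $BH$.

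With these hypotheses verified, Theorem \ref{dual.base} yields split exact sequences
\begin{eqnarray*}
0 \longrightarrow j_{\scriptscriptstyle\sharp}\rmph(X,\gbb) &\longrightarrow& \rmph(X,\gbb/H) \longrightarrow \underset{i>0}{\prod} H^i(X; \pi_{i+1}(\gbb/H)/j_{\scriptscriptstyle\sharp}\pi_{i+1}(\gbb)\otimes\hat{\zbb}/\zbb) \longrightarrow 0,
\end{eqnarray*}
and similarly with $\check{\zbb}/\zbb$ for $\rmsph$. To upgrade these to the asserted isomorphisms I must show $j_{\scriptscriptstyle\sharp}\rmph(X,\gbb) = 0$ and $j_{\scriptscriptstyle\sharp}\rmsph(X,\gbb) = 0$. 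Since $j_{\scriptscriptstyle\sharp}$ factors through $\rmph(X,\gbb) \to \rmph(X,\gbb/H)$, it suffices to show $\rmph(X,\gbb) = 0$, i.e. that there are no nontrivial phantom maps from $X = BG$ (or its suspensions) into $\gbb = U$ or $O$. This is precisely where the Atiyah--Segal completion theorem \cite{AS} comes in: $[BG, U]$ and $[BG, O]$ are, up to stabilization, $K^{-1}(BG)$ and $KO^{-1}(BG)$, which by Atiyah--Segal are completions of the representation rings and hence have no phantom behavior — more precisely $\mathrm{Ph}(BG,\gbb)$ vanishes because $\gbb$ is, in the range that matters, the infinite loop space of a spectrum whose $BG$-cohomology is Hausdorff/complete, forcing $\lim^1$ terms controlling phantoms to vanish. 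I would phrase this as: $\gbb = \Omega^\infty$ of $KU$ or $KO$ (suitably shifted), $[X,\gbb] \cong \widetilde{K}^{-1}(X)$ or $\widetilde{KO}^{-1}(X)$, and Atiyah--Segal identifies this with a completed representation ring on which the natural map to $[K, \gbb]$ for finite skeleta $K$ is injective on the phantom part, giving $\rmph(X,\gbb)=0$. The same argument, composed with $e_{\gbb}$, handles $\rmsph$.

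The main obstacle I anticipate is the $\qcal$-membership check — specifically producing a clean argument that $\gbb/H$ has vanishing rational Whitehead products for \emph{all} compact Lie subgroups $H$, since $\gbb/H$ need not itself be an H-space and its rational homotopy type depends on the embedding $H\hookrightarrow\gbb$; one must argue through the fibration and the formality/coformality of $\gbb$ and $BH$. The second delicate point is making the Atiyah--Segal input yield exactly $\rmph(X,\gbb)=0$ rather than merely a statement about $[X,\gbb]$: one needs that the completion appearing in Atiyah--Segal is the profinite/adic completion that matches the arithmetic square controlling phantom maps, so that the $\lim^1$ of the tower of finite skeleta has no phantom contribution. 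Everything else — naturality, splitting, the passage from $BG$ to its iterated suspensions via the suspension isomorphism in $K$-theory — is routine.
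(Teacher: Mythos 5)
Your overall strategy coincides with the paper's: apply Theorem \ref{dual.base} to the fibration $\gbb \xrightarrow{j} \gbb/H \rightarrow BH$ with $L=\gbb$, $Y'=BH$, obtain $[X,\Omega\widehat{BH}]=0$ from $(X,BH)\in\acal\times\bcal$, and kill the subgroups $j_{\sharp}\rmph(X,\gbb)$ and $j_{\sharp}\rmsph(X,\gbb)$ by showing $\rmph(X,\gbb)=0$ via Atiyah--Segal. The paper packages this last step as Lemma \ref{adjoint} (reducing $\rmph(\Sigma^{l}BG,\gbb)$ to $\rmph(BG,\Omega^{l}\gbb)$) plus Lemma \ref{AScompletion} (reducing based to unbased maps via the $H$-space structure and running the proof of \cite[Proposition 4.2]{AS}, with \cite{AHJM} supplying the $KO$-analogue, since \cite{AS} treats $KR$ rather than $KO$); your sketch is the same mechanism, just less precise, and is acceptable in outline.

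The genuine gap is your verification that $(X,\gbb/H)\in\qcal$. You propose to prove that \emph{all} rational Whitehead products of $\gbb/H$ vanish, on the grounds that $\gbb/H$ is rationally a product of spheres and that Whitehead-triviality of $\gbb$ and $BH$ propagates through the fibration. Both claims fail. Take $\gbb=U$ and $H=T^{2}$ a maximal torus of $U(2)\subset U$: then $U/T^{2}$ is the homotopy fiber of $BT^{2}\rightarrow BU$, whose minimal Sullivan model is $\Lambda(x,y_{2},y_{3},\dots)$ with $|x|=2$, $|y_{k}|=2k-1$, $dy_{2}=-x^{2}$ and $dy_{k}=0$ otherwise; the quadratic term $x^{2}$ says precisely that the Whitehead square of a generator of $\pi_{2}(U/T^{2})\otimes\qbb$ is nonzero in $\pi_{3}(U/T^{2})\otimes\qbb$. (In fact $U/T^{2}$ \emph{is} rationally $S^{2}\times S^{5}\times S^{7}\times\cdots$, which shows that ``rationally a product of spheres'' does not yield Whitehead-triviality, since even spheres have nontrivial rational Whitehead squares; likewise the homotopy fiber of a map between rationally Whitehead-trivial spaces need not be Whitehead-trivial.) What is true, and what $\qcal$-membership actually rests on, is the pairwise statement: $H^{\mathrm{odd}}(BG;\qbb)=0$, so only pairs $(i,j)$ with $i,j$ even are at issue, and for those one needs triviality of Whitehead products of odd-degree rational homotopy classes of $\gbb/H$; this holds because the homotopy fiber of $BH\rightarrow B\gbb$ admits a pure Sullivan model (differentials of odd generators lie in the subalgebra on even generators), so only products of two even-degree classes can survive rationally. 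For $l\geq 1$ the suspension $\Sigma^{l}BG$ has trivial cup products, so only $(BG,\gbb/H)$ needs checking; the paper disposes of exactly this point by citing \cite[Example 4.6(2)]{phantom}, whereas your proposed argument would already fail for $H=T^{2}$. A secondary inaccuracy: $BH$ need not be nilpotent for an arbitrary compact Lie group $H$ (e.g.\ when $\pi_{0}(H)$ is nonabelian simple), so your parenthetical justification of ``nilpotent of finite type'' is not automatic and must instead be handled within the framework of \cite{phantom}.
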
 
		Further applications of Theorem \ref{dual.base} are given in Section 2 (see Examples \ref{exa 2.3} - \ref{cor1.5} and Remark \ref{nontrivial2}).
		
		\begin{rem}\label{introend}  
			Most calculations of $\rmph(X, Y)$ have assumed that $Y$ is a nilpotent finite complex or its iterated loop space (\cite[Theorem A and Remark 2.6]{phantom}). Corollaries \ref{positiveintegerm}-\ref{cor1.7} and Examples \ref{exa 2.3}-\ref{cor1.5}, and Remark \ref{contractible} give calculational results for $(X, Y)$ such that $Y$ is not in $\bcal$ or $\bcal'$.\\
		\end{rem}
		\section{Proofs of main results}\label{section2}
		In this section, we prove Theorem \ref{dual.base} and Corollaries \ref{positiveintegerm}-\ref{cor1.7}, and then give further applications of Theorem\ref{dual.base}.
		\par\indent
		We begin by recalling the basic results on $\rmph(X, Y)$ and $\rmsph(X, Y)$.
		\subsection{Groups of homotopy classes of phantom maps}
		In this subsection, we make a review of the main results of \cite{phantom}.
		\par\indent
		Recall the definition of the full subcategory $\qcal$ of $\ccal\wcal^{\mathrm{op}} \times \ncal$ from Section 1. A pair $(X, Y) \in C\wcal^{\rm op} \times \ncal$ is in $\qcal$ if $X$ is a co-$H_{0}$-space or $Y$ is an $H_{0}$-space. $\qcal$ contains many other pairs (see \cite[Section 4.2]{phantom}). 
		\par\indent
		The following theorem, which is Theorem 2.3 in \cite{phantom}, is a fundamental result on group structures on $\rmph(X, Y)$ and $\rmsph(X, Y)$.
		\begin{thm} \label{fundamental}
			Let $(X,Y)$ be an object of $\mathcal Q$.
			\begin{itemize}
				
				\rmitem{1} $\rmph(X,Y)$ and $\rmsph(X,Y)$ have natural divisible
				abelian group structures, for which $\rmsph(X, Y)$ is a subgroup of $\rmph(X, Y)$.
				\rmitem{2} Let $(f^{\rm op},g):(K,L)\longrightarrow (X,Y)$ be a morphism of $\mathcal{CW}^{\rm op}\times\ncal$. Then, the images $\im\  \rmph(f,g)$ and $\im\  \rmsph(f,g)$ are divisible abelian subgroups of $\rmph(X,Y)$ and $\rmsph(X,Y)$  respectively.
				\rmitem{3} If $X$ is a co-$H$-space or $Y$ is an $H$-space, the group structures on $\rmph(X,Y)$ and $\rmsph(X,Y)$ are compatible with the multiplicative structure on $[X,Y]$.
			\end{itemize}
		\end{thm}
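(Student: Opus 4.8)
The statement is the group-theoretic core of the whole theory, so I would not invoke any of the later results; instead I would build everything from the $\varprojlim^{1}$ description of phantom maps. Writing $X=\bigcup_n X_n$ as the union of its finite subcomplexes, Milnor's exact sequence for $[X,Y]$ identifies $\rmph(X,Y)$, the homotopy classes trivial on every $X_n$, with ${\lim_n}^{1}\,[\Sigma X_n,Y]={\lim_n}^{1}\,[X_n,\Omega Y]$ as pointed sets, naturally in $(X,Y)$. The whole theorem thus becomes a structural statement about this $\varprojlim^{1}$, and the role of the hypothesis $(X,Y)\in\qcal$ is to force the tower of groups $A_n\coloneqq[X_n,\Omega Y]$ to behave, as far as $\varprojlim^{1}$ is concerned, like a tower of finitely generated abelian groups.

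The heart of the argument, which I expect to be the main obstacle, is the following commutator analysis. Since $Y$ is nilpotent of finite type and each $X_n$ is finite, every $A_n=[X_n,\Omega Y]$ is a finitely generated nilpotent group whose rational Malcev Lie algebra $A_n\otimes\qbb$ has underlying vector space $\bigoplus_i H^{i}(X_n;\pi_{i+1}(Y)\otimes\qbb)$ and whose bracket is computed, through the reduced diagonal of $X_n$ and the Samelson product of $\Omega Y$, by the rational cup products $H^{i}(X_n;\qbb)\otimes H^{j}(X_n;\qbb)\to H^{i+j}(X_n;\qbb)$ paired against the rational Whitehead products $(\pi_{i+1}(Y)\otimes\qbb)\otimes(\pi_{j+1}(Y)\otimes\qbb)\to\pi_{i+j+1}(Y)\otimes\qbb$. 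The defining condition of $\qcal$---that for each $i,j>0$ one of these two products vanishes---makes every bracket rationally trivial, so the commutator subgroup $C_n\coloneqq[A_n,A_n]$ is torsion; being a finitely generated nilpotent group, $A_n$ has finite torsion subgroup, whence $C_n$ is finite. The tower $\{C_n\}$ of finite groups is Mittag--Leffler, so ${\lim_n}^{1}C_n=\ast$, and the six-term $\varprojlim$--$\varprojlim^{1}$ sequence of pointed sets attached to the extensions $1\to C_n\to A_n\to A_n^{\mathrm{ab}}\to1$ yields a natural bijection ${\lim_n}^{1}A_n\cong{\lim_n}^{1}A_n^{\mathrm{ab}}$. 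Since each $A_n^{\mathrm{ab}}$ is finitely generated abelian and $\varprojlim^{1}$ of a tower of finitely generated abelian groups is a divisible group, this transports to $\rmph(X,Y)$ a natural divisible abelian group structure. The delicate points to nail down are the rational identification of the commutator with the cup/Whitehead pairing indexed exactly by the pairs $(i,j)$ of $\qcal$, and the control of the integral torsion that guarantees it drops out of $\varprojlim^{1}$.

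For $\rmsph(X,Y)$ I would run the same machinery relative to the local expansion $e_Y\colon Y\to\check Y$. Naturality of the $\varprojlim^{1}$ identification makes $e_{Y\sharp}\colon\rmph(X,Y)\to\rmph(X,\check Y)$ a homomorphism of divisible abelian groups, so its kernel $\rmsph(X,Y)$ is a subgroup; applying the central-extension argument to the tower $\ker\bigl([X_n,\Omega Y]\to[X_n,\Omega\check Y]\bigr)$ computing this kernel (equivalently, to the homotopy fibre of $e_Y$) exhibits $\rmsph(X,Y)$ as built from $\varprojlim^{1}$-terms of finitely generated abelian groups, hence divisible abelian, which completes part (1). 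Part (2) is then formal: a morphism $(f^{\mathrm{op}},g)\colon(K,L)\to(X,Y)$ induces a map of towers and therefore a homomorphism on $\varprojlim^{1}$, i.e. on $\rmph$ and $\rmsph$, and the homomorphic image of a divisible abelian group is a divisible abelian subgroup. Finally, for part (3), when $Y$ is an $H$-space (resp. $X$ a co-$H$-space) the $H$-structure (resp. co-$H$-structure) endows $[X,Y]$ with a second multiplication that is carried by the tower $\{[X_n,\Omega Y]\}$; an Eckmann--Hilton argument, using that on the tower both operations arise from compatible loop/co-loop comultiplications, shows this multiplication agrees on $\rmph(X,Y)$ and $\rmsph(X,Y)$ with the group law constructed above, giving the asserted compatibility.
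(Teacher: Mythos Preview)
The present paper does not prove this theorem: it is simply quoted as Theorem 2.3 of the author's earlier paper \cite{phantom}, so there is no proof here to compare your attempt against. From the way \cite{phantom} is invoked later (see the proof of Theorem \ref{dual.base} and the references there to \cite[Corollary 5.3, Proposition 5.10]{phantom}) one can infer that the argument in \cite{phantom} does \emph{not} go through $\varprojlim^{1}$ of a tower $[X_n,\Omega Y]$, but rather through the homotopy fibre $F_Y$ of the profinite completion $c_Y\colon Y\to\hat Y$. Since $Y$ is nilpotent of finite type, $F_Y$ is a rational space with $\pi_i(F_Y)\cong\pi_{i+1}(Y)\otimes\hat\zbb/\zbb$; there is a natural surjection $[X,F_Y]\twoheadrightarrow\rmph(X,Y)$, and the $\qcal$-hypothesis is applied directly to $[X,F_Y]$ (maps from $X$ into a rational space) to exhibit it as an abelian group, which then descends.

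Your $\varprojlim^{1}$ route is a natural alternative, but as written it has a real gap. The $\qcal$ condition constrains the rational cup products of $X$ itself, not those of its finite subcomplexes $X_n$: vanishing of $H^i(X;\qbb)\otimes H^j(X;\qbb)\to H^{i+j}(X;\qbb)$ does not imply the same for $H^{\ast}(X_n;\qbb)$. Hence for those pairs $(i,j)$ where it is the cup product rather than the Whitehead product that the hypothesis kills, your commutator analysis on $A_n=[X_n,\Omega Y]$ breaks down and $[A_n,A_n]$ need not be torsion, so the Mittag--Leffler step is unjustified. The $F_Y$ approach sidesteps this entirely because it works with $X$ globally and never has to transport the hypothesis to finite pieces. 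A secondary issue: writing $X=\bigcup_n X_n$ as a sequential tower already presupposes a countable cofinal system of finite subcomplexes, which is not part of the hypotheses on $X\in\ccal\wcal$.
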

		The following theorem, which is Theorem 2.7 in \cite{phantom}, presents a powerful method for calculating the groups $\rmph(X, Y)$ and $\rmsph(X, Y)$ for $(X, Y) \in \qcal$ with $[X, \Omega\hat{Y}] \neq 0$.
		\par\indent
		Note that in the theorem, $p^{\scriptscriptstyle\sharp}\rmph(K, Y)$ and $p^{\scriptscriptstyle\sharp}\rmsph(K, Y)$ are the subgroups of $\rmph(X,Y)$ and $\rmsph(X,Y)$ (see Theorem \ref{fundamental}(2)).
		\begin{thm}\label{calculational.base}
			Let $(X, Y)$ be in $\mathcal Q$. Let ${X' \overset{i}{\longrightarrow} X \overset{p}{\longrightarrow} K}$ be a cofibration sequence with ${[X', \Omega \hat{Y}] = 0}$, or a fibration sequence with weakly contractible ${{\rm map}_\ast(X', \Omega \hat{Y})}$. Then there exist natural split exact sequences of abelian groups given by
			\begin{eqnarray*}
				0 \longrightarrow p^{\scriptscriptstyle\sharp}\rmph(K, Y) & \longrightarrow  & \rmph(X, Y)\\ & \longrightarrow & \underset{i>0}{\prod}\ H^i (X ; \pi_{i+1}(Y) \otimes \hat{\mathbb Z} / \mathbb Z) / p^\ast H^i(K ; \pi_{i+1}(Y) \otimes \hat{\mathbb Z} / \mathbb Z) \longrightarrow 0, \\
				0 \longrightarrow p^{\scriptscriptstyle\sharp}\rmsph(K, Y) & \longrightarrow & \rmsph(X, Y)\\ & \longrightarrow & \underset{i>0}{\prod}\ H^i (X ; \pi_{i+1}(Y) \otimes \check{\mathbb Z} / \mathbb Z) / p^\ast H^i(K ; \pi_{i+1}(Y) \otimes \check{\mathbb Z} / \mathbb Z) \longrightarrow 0.
			\end{eqnarray*}
		\end{thm}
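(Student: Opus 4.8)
The plan is to reduce both split exact sequences to a single containment of subgroups, using the structural description of $\rmph$ and $\rmsph$ coming from the arithmetic fracture square, and then to finish with an elementary argument about divisible groups.

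First I would recall from \cite{phantom} the fracture-square description underlying Theorem \ref{fundamental}. Writing $\hat{Y}_{0}$ and $Y_{0}$ for the rationalizations of $\hat{Y}$ and $Y$, the homotopy pullback square expressing $Y$ in terms of $Y_{0}$, $\hat{Y}$, $\hat{Y}_{0}$ yields, for every pointed connected $CW$-complex $Z$, a natural identification of $\rmph(Z,Y)$ with the cokernel of $[Z,\Omega\hat{Y}]\times[Z,\Omega Y_{0}]\longrightarrow[Z,\Omega\hat{Y}_{0}]$ (the image of the Mayer--Vietoris connecting map into $[Z,Y]$). Because $(X,Y)\in\mathcal{Q}$, the rational space $\Omega\hat{Y}_{0}$ splits as a product of Eilenberg--MacLane spaces, so dividing out the image of $[Z,\Omega Y_{0}]$ produces the group $A(Z):=\prod_{i>0}H^{i}(Z;\pi_{i+1}(Y)\otimes\hat{\mathbb{Z}}/\mathbb{Z})$ (using $\hat{\mathbb{Z}}\otimes\mathbb{Q}/\mathbb{Q}\cong\hat{\mathbb{Z}}/\mathbb{Z}$). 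Thus there is a natural surjection $\Psi_{Z}\colon A(Z)\twoheadrightarrow\rmph(Z,Y)$ whose kernel $B(Z)$ is the image in $A(Z)$ of $[Z,\Omega\hat{Y}]$; in particular $\Psi_{Z}$ is an isomorphism precisely when $[Z,\Omega\hat{Y}]=0$, recovering the base case. As in \cite{phantom}, the parallel description of $\rmsph$ gives a natural surjection $\Psi'_{Z}\colon A'(Z)\twoheadrightarrow\rmsph(Z,Y)$, $A'(Z):=\prod_{i>0}H^{i}(Z;\pi_{i+1}(Y)\otimes\check{\mathbb{Z}}/\mathbb{Z})$, whose kernel $B'(Z)$ is likewise governed by $[Z,\Omega\hat{Y}]$. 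With this in hand the whole theorem reduces to the single claim that $B(X)\subseteq p^{\ast}A(K)$ and $B'(X)\subseteq p^{\ast}A'(K)$: granting it, $\Psi_{X}$ identifies $p^{\sharp}\rmph(K,Y)$ with $p^{\ast}A(K)/B(X)$ and the cokernel $\rmph(X,Y)/p^{\sharp}\rmph(K,Y)$ with $A(X)/p^{\ast}A(K)$, which by exactness of products is the stated quotient $\prod_{i>0}H^{i}(X;\pi_{i+1}(Y)\otimes\hat{\mathbb{Z}}/\mathbb{Z})\big/p^{\ast}H^{i}(K;\pi_{i+1}(Y)\otimes\hat{\mathbb{Z}}/\mathbb{Z})$, and similarly for $\rmsph$.

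The heart of the proof is therefore to show that
$$p^{\ast}\colon[K,\Omega\hat{Y}]\longrightarrow[X,\Omega\hat{Y}]\ \text{is surjective},$$
for then, by naturality of the map $[Z,\Omega\hat{Y}]\to A(Z)$, every element of $B(X)$ is $p^{\ast}$ of an element of $A(K)$, giving $B(X)\subseteq p^{\ast}A(K)$ and likewise $B'(X)\subseteq p^{\ast}A'(K)$. In the cofibration case I would apply $[-,\Omega\hat{Y}]$ to the Puppe sequence of $X'\overset{i}{\to}X\overset{p}{\to}K$: exactness of $[K,\Omega\hat{Y}]\xrightarrow{p^{\ast}}[X,\Omega\hat{Y}]\xrightarrow{i^{\ast}}[X',\Omega\hat{Y}]$ together with the hypothesis $[X',\Omega\hat{Y}]=0$ forces $p^{\ast}$ onto. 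In the fibration case, where $X'$ is the fiber, mapping out behaves badly, and this is the step I expect to be the main obstacle. Here I would instead invoke the Serre spectral sequence: weak contractibility of ${\rm map}_{\ast}(X',\Omega\hat{Y})$ says that $X'$ is $\pi_{j}(\Omega\hat{Y})$-acyclic for every $j$, so, climbing a principal Postnikov tower of the nilpotent space $\Omega\hat{Y}$ and using at each Eilenberg--MacLane stage the collapse of the Serre spectral sequence of $X'\to X\to K$ in the relevant coefficients, one gets that $p^{\ast}\colon{\rm map}_{\ast}(K,\Omega\hat{Y})\to{\rm map}_{\ast}(X,\Omega\hat{Y})$ is a weak equivalence, hence a bijection on $[-,\Omega\hat{Y}]$. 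This dichotomy also explains the two different hypotheses: the cofibration case needs only the vanishing of $\pi_{0}$ of the mapping space, whereas the spectral-sequence argument for the fibration requires all of its homotopy groups to vanish.

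Finally I would record the formal points. The sequence $0\to p^{\sharp}\rmph(K,Y)\to\rmph(X,Y)\to A(X)/p^{\ast}A(K)\to 0$ is exact by the kernel/cokernel computation above, once $B(X)\subseteq p^{\ast}A(K)$ is known. It splits because $p^{\sharp}\rmph(K,Y)$ is a divisible abelian group by Theorem \ref{fundamental}(2), hence an injective $\mathbb{Z}$-module, so its inclusion as a subgroup admits a retraction. Naturality of the sequence follows from that of $\Psi_{Z}$ and of the cohomology functors in the variable $Z$. The argument for $\rmsph$ is verbatim the same with $A'$, $B'$, $\check{\mathbb{Z}}$ in place of $A$, $B$, $\hat{\mathbb{Z}}$, the identical surjectivity of $p^{\ast}$ on $[-,\Omega\hat{Y}]$ driving the containment $B'(X)\subseteq p^{\ast}A'(K)$.
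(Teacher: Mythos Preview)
Your overall strategy coincides with the one the paper uses. Note first that this theorem is not proved in the present paper at all: it is quoted as \cite[Theorem~2.7]{phantom}, and the only related argument here is the proof of the dual Theorem~\ref{dual.base}, which the author explicitly says follows the same pattern. That pattern is exactly yours: pass to the homotopy fiber $F_{Y}$ of $c_{Y}\colon Y\to\hat{Y}$ (equivalently, to the cokernel coming from the arithmetic fracture square), identify $[X,F_{Y}]$ with $\prod_{i>0}H^{i}(X;\pi_{i+1}(Y)\otimes\hat{\mathbb Z}/\mathbb Z)$ using the $\mathcal Q$ hypothesis, reduce the computation of $\rmph(X,Y)/p^{\sharp}\rmph(K,Y)$ to the surjectivity of $p^{\ast}\colon[K,\Omega\hat{Y}]\to[X,\Omega\hat{Y}]$, and obtain the splitting from divisibility (Theorem~\ref{fundamental}(2)). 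The treatment of $\rmsph$ by the parallel $\check{\mathbb Z}$-story is also the same. So on the large scale there is no divergence.

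There is, however, a genuine gap in your handling of the fibration case. From weak contractibility of ${\rm map}_{\ast}(X',\Omega\hat{Y})$ you assert that $X'$ is $\pi_{j}(\Omega\hat{Y})$-acyclic for every $j$, and then run the Serre spectral sequence of $X'\to X\to K$ level by level up the Postnikov tower of $\Omega\hat{Y}$. The acyclicity claim is false in general: $\pi_{k}\,{\rm map}_{\ast}(X',\Omega\hat{Y})=[X',\Omega^{k+1}\hat{Y}]$, so vanishing for all $k\ge 0$ controls only cohomology of $X'$ in degrees at most the top nonvanishing homotopy of $\Omega\hat{Y}$, not in all degrees; moreover, weak contractibility of ${\rm map}_{\ast}(X',\Omega\hat{Y})$ does not descend to weak contractibility of ${\rm map}_{\ast}(X',K(\pi_{n}(\Omega\hat{Y}),n))$ for each Postnikov layer, so the induction cannot even start as written. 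The step you want is nonetheless true, but it is obtained by a different device: Zabrodsky's lemma (as in Miller's proof of the Sullivan conjecture) says that for a fibration $F\to E\overset{p}{\to}B$ with ${\rm map}_{\ast}(F,W)$ weakly contractible, the induced map $p^{\ast}\colon{\rm map}_{\ast}(B,W)\to{\rm map}_{\ast}(E,W)$ is a weak equivalence. Applying this with $F=X'$, $E=X$, $B=K$, $W=\Omega\hat{Y}$ gives directly that $p^{\ast}\colon[K,\Omega\hat{Y}]\to[X,\Omega\hat{Y}]$ is a bijection, which is what your reduction needs. Once you replace the acyclicity/Serre argument by this lemma, the rest of your proof goes through and matches the paper's method.
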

		
		See \cite[Corollaries 2.8-2.10 and Example 6.6]{phantom} for the applications.
		\subsection{Proofs of Theorem \ref{dual.base} and Corollary \ref{positiveintegerm}}
		For a nilpotent space $Y$ of finite type, the profinite completion $\hat{Y}$ and the local expansion $\check{Y}$ are defined by 
		$\hat{Y} = {\prod}_{p}\, \hat{Y}_p$ and $\check{Y} = {\prod}_{p}\, Y_{(p)}$, respectively, where $\hat{Y}_p$ and $Y_{(p)}$ are the $p$-profinite completion and the $p$-localization of $Y$ respectively (\cite{Sullivan74}). Thus, we can establish a commutative diagram of natural transformations
		\begin{center}
			\begin{tikzcd}
			&  Y  \ar[swap]{ld}{e_{Y}} \ar{rd}{c_{Y}} &  \\
			\check{Y} \ar[swap]{rr}{d_{Y}}  & & \hat{Y}.
			\end{tikzcd}
		\end{center}
		Let $F_{Y}$ denote the homotopy fiber of $c_{Y}$.
		\begin{proof}[{\sl Proof of Theorem \ref{dual.base}}]
			Since the proof of Theorem \ref{dual.base} is similar to that of Theorem \ref{calculational.base}, our proof is sketchy; the details are found in the proof of \cite[Theorem 2.7]{phantom}.
			\par\indent
			By replacing $Y$ and $Y'$ with their universal covers, we may assume that $L$ is in $\mathcal N$ (see \cite[Remark 5.6]{phantom} and \cite[Proposition 4.4.1]{no10}). 
			\par\indent
			{\sl The case of $\rmph(X,Y)$.} By \cite[Corollary 5.3]{phantom} and the comment before \cite[Lemma 3.5]{phantom}, we have the morphism of exact sequences of pointed sets
			\begin{center}	
				\begin{tikzcd}
				{[X, \Omega \hat{L}]}			 \arrow[swap]{d}{{\Omega\hat{j}_{\sharp}}}   \arrow{r} &   {[X, F_L]} \arrow{r} \arrow[swap]{d}{F_{j\sharp}} & \rmph(X, L) \arrow{r} \arrow[swap]{d}{j_{\sharp}} & 0\\
				{[X, \Omega \hat{Y}]} \arrow{r} & {[X, F_{Y}]} \arrow{r} & \rmph(X, Y) \arrow{r} & 0.
				\end{tikzcd}
			\end{center}
			Note that since $[X,\Omega\hat{Y'}]=0$, the left vertical arrow is surjective. Next, consider the induced morphism of exact sequences of pointed sets
			\begin{center}	
				\begin{tikzcd}
				0 \arrow{r} &	\ker\ \alpha			 \arrow[swap]{d}{\beta}   \arrow{r} &   F_{j\sharp}{[X, F_L]} \arrow{r}{\alpha} \arrow[swap]{d} & j_{\sharp}\rmph(X, L) \arrow{r} \arrow[swap]{d} & 0\\
				0 \arrow{r} &	\overline{[X, \Omega \hat{Y}]} \arrow{r} & {[X, F_{Y}]} \arrow{r} & \rmph(X, Y) \arrow{r} & 0,
				\end{tikzcd}
			\end{center}
			where $\overline{[X,\Omega\hat{Y}]}$ denotes the image of $[X,\Omega\hat{Y}]$ and $\alpha$ denotes the map induced by the natural quotient map $[X,F_L]\rightarrow \rmph(X,L)$. Note that this is a morphism of exact sequences of abelian groups (see \cite[Theorem 2.3 and its proof]{phantom}) and that since $\Omega\hat{j}_\sharp:[X,\Omega\hat{L}]\rightarrow[X,\Omega\hat{Y}]$ is surjective, $\beta$ is also surjective. Then, we have
			\[
			\rmph(X, Y) / j_{\sharp} \rmph(X, L) \cong  [X, F_{Y}]/ F_{j\sharp} [X, F_{L}],
			\]
			from which we obtain the desired sequence (see \cite[Proposition 5.10 and the proof of Theorem 2.3(2)]{phantom}).
			\par\indent
			{\sl The case of $\rmsph(X,Y)$.} As mentioned in the introduction of Section 6.1 of \cite{phantom}, $\rmph(X,Y)$ and $\rmph(X,\check{Y})$ generate analogous results. Thus, there exists a morphism of exact sequences of abelian groups
			\begin{center}	
				\begin{tikzcd}
				0 \arrow{r} &	j_{\sharp}\rmph(X, L)\arrow[swap]{d}{\epsilon}   \arrow{r} &  \rmph(X, L)  \arrow{r} \arrow[swap]{d}{e_{Y\sharp}} & \underset{i>0}{\displaystyle\prod }\ H^i (X;\pi_{i+1}(Y)/j_\sharp\pi_{i+1}(L)\otimes\hat{\mathbb{Z}}/\mathbb{Z}) \arrow{r} \arrow[swap]{d} & 0\\
				0 \arrow{r} &	j_{\sharp}\rmph(X, \check{L}) \arrow{r} & \rmph(X, \check{Y}) \arrow{r} & \underset{i>0}{\displaystyle\prod}\ H^i (X;\pi_{i+1}(Y)/j_\sharp\pi_{i+1}(L)\otimes\hat{\mathbb{Z}}/\check{\mathbb{Z}}) \arrow{r} & 0,
				\end{tikzcd}
			\end{center}
			which gives rise to the exact sequence
			\[
			0 \longrightarrow \ker\ \epsilon  \longrightarrow \mathrm{SPh}(X, Y) \longrightarrow \underset{i > 0}{\prod}H^{i}(X; \pi_{i+1}(Y)/j_{\sharp}\pi_{i+1}(L) \otimes \check{\zbb}/\zbb ) \longrightarrow 0.
			\]
			An argument similar to that in the proof of \cite[Theorem 2.7]{phantom} shows that $\ker\, \epsilon \cong j_{\sharp} \mathrm{SPh}(X, L)$.
		\end{proof}
		\begin{proof}[{\sl Proof of Collorary \ref{positiveintegerm}}]
			We have the fibration sequence
			\[
			\Omega Y^{'(m)} \to Y'\langle m\rangle \to Y', 
			\]
			where $Y^{'(m)}$ is the Postnikov $m$-stage of $Y'$. Since $X$ is a $CW$-complex with finite skeleta, $\rmph(X,\Omega Y^{'(m)})$, and hence $\rmsph(X,\Omega Y^{'(m)})$ vanishes. Thus, the result follows from Theorem \ref{dual.base}.
		\end{proof}
		\subsection{Proof of Corollary \ref{cor1.7} and further applications}
		For the proof of Corollary \ref{cor1.7}, we prove the following two lemmas, which are interesting in their own right.
		\begin{lem}\label{adjoint}
			Let $X$ and $Y$ be connected $CW$-complexes and $K$ be a finite complex. Then there exists a natural isomorphism
			\begin{eqnarray*}
				\rmph(K {\scriptstyle \wedge} \hspace{0.4mm}X, Y) & \cong & \rmph(X, {\rm map}_*(K, Y))
			\end{eqnarray*}
			and if $Y$ is nilpotent of finite type, then there also exists a natural isomorphism
			\begin{eqnarray*}
				\rmsph(K {\scriptstyle \wedge} \hspace{0.4mm}X, Y) & \cong & \rmsph(X, {\rm map}_*(K, Y)). \\
			\end{eqnarray*}
		\end{lem}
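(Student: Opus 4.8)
The plan is to establish the isomorphism $\rmph(K \wedge X, Y) \cong \rmph(X, \map_*(K, Y))$ by restricting the classical exponential (adjunction) homeomorphism $\map_*(K \wedge X, Y) \cong \map_*(X, \map_*(K, Y))$ to the subsets of phantom maps, and then do the analogous thing with $\check{Y}$ in place of $Y$ to get the special phantom statement. First I would recall that the smash–mapping adjunction is a natural homeomorphism of pointed mapping spaces, hence induces a natural bijection $[K \wedge X, Y] \cong [X, \map_*(K, Y)]$ on homotopy classes. The real content is that this bijection carries $\rmph(K \wedge X, Y)$ \emph{onto} $\rmph(X, \map_*(K, Y))$.

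For that I would argue at the level of the defining condition. A class $\xi \in [X, \map_*(K, Y)]$ is phantom iff for every finite complex $P$ and every $h \colon P \to X$, the composite $h^*\xi$ is trivial. Under the adjunction, $h^*\xi$ corresponds to the restriction along $K \wedge h \colon K \wedge P \to K \wedge X$ of the class $\eta \in [K \wedge X, Y]$ adjoint to $\xi$; and $h^*\xi = 0$ in $[P, \map_*(K,Y)]$ iff $(K \wedge h)^*\eta = 0$ in $[K \wedge P, Y]$, again by naturality of the adjunction. So I need: $\eta$ kills every map from a finite complex into $K \wedge X$ if and only if $\eta$ kills every map of the special form $K \wedge h$ with $h \colon P \to X$, $P$ finite. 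The ``only if'' direction is immediate since $K \wedge P$ is a finite complex when $K$ and $P$ are. For the ``if'' direction I would use that $K$ is a finite complex: any map $g \colon Q \to K \wedge X$ from a finite complex $Q$ factors, up to homotopy, through $K \wedge X_n$ for some finite skeleton $X_n$ of $X$ (since $K \wedge X_n$ exhausts $K \wedge X$ and $Q$ is compact), and then $K \wedge X_n = K \wedge h$ for the skeletal inclusion $h \colon X_n \hookrightarrow X$; so $g^*\eta$ factors through $(K \wedge h)^*\eta = 0$. Hence the two phantom conditions coincide and the adjunction bijection restricts to the claimed isomorphism; naturality in $X$ and $Y$ is inherited from naturality of the adjunction.

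For the special phantom part, when $Y$ is nilpotent of finite type, I would first check that $\map_*(K, Y)$ is again nilpotent of finite type for $K$ a finite complex, so that $\rmsph(X, \map_*(K,Y))$ is defined, and that the local expansion is compatible with $\map_*(K, -)$ in the sense that there is a natural weak equivalence $\map_*(K, \check{Y}) \simeq \widecheck{\map_*(K,Y)}$ (both sides being $\prod_p \map_*(K, Y_{(p)})$ up to the standard identification $\map_*(K, Y_{(p)}) \simeq \map_*(K,Y)_{(p)}$ valid because $K$ is finite). Granting this, the square relating $e_{Y\sharp}$ on $[K \wedge X, Y] \to [K \wedge X, \check Y]$ to $e_{\map_*(K,Y)\,\sharp}$ on $[X, \map_*(K,Y)] \to [X, \widecheck{\map_*(K,Y)}]$ commutes by naturality, and the first isomorphism (applied to $Y$ and to $\check{Y}$) identifies the kernels, i.e. $\rmsph(K \wedge X, Y) \cong \rmsph(X, \map_*(K,Y))$.

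The main obstacle I expect is the ``if'' direction of the phantom equivalence — precisely the point that a map from a finite complex into $K \wedge X$ can be pushed into $K \wedge (\text{finite skeleton of } X)$; this is where finiteness of $K$ is essential (it fails for general $K$) and where one must be slightly careful about $CW$ structures on the smash product and about compactness of the source. A secondary technical point is the identification $\widecheck{\map_*(K,Y)} \simeq \map_*(K,\check Y)$ and the finite-type/nilpotence of $\map_*(K,Y)$, but these are standard consequences of $K$ being finite and can be cited from the literature on localization and completion of mapping spaces.
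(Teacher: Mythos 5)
Your proof is correct and takes essentially the same route as the paper: the smash--mapping adjunction bijection $[K\wedge X,Y]\cong[X,\map_*(K,Y)]$ is restricted to phantom classes via the observation that, $K$ being finite, a map from a finite complex into $K\wedge X$ lands in $K\wedge X_\alpha$ for a finite subcomplex $X_\alpha\subseteq X$, and the special phantom case follows by applying the first isomorphism with $\check{Y}$ in place of $Y$ and identifying $\map_*(K,\check{Y})$ with the local expansion of $\map_*(K,Y)$ (the paper cites May--Ponto, Theorem 6.3.2, for exactly this commutation). One small correction: phrase the compactness step in terms of finite subcomplexes of $X$ rather than ``finite skeleta,'' since the skeleta of $X$ need not be finite when $X$ is not of finite type.
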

		
		\begin{proof}
			Note that $f : K {\scriptstyle \wedge} \hspace{0.4mm}X \longrightarrow Y$ is phantom if and only if $f \mid_{K {\scriptstyle \wedge} \hspace{0.3mm}X_\alpha}$ is null homotopic for any finite subcomplex $X_\alpha$ of $X$.
			Then the natural isomorphism $[K {\scriptstyle \wedge} \hspace{0.4mm}X, Y] \cong [X, {\rm map}_*(K, Y)]$ is clearly restricted to the first natural isomorphism. If $Y$ is nilpotent of finite type, we obtain the isomorphism
			\[
			\rmph(K {\scriptstyle \wedge} \hspace{0.4mm}X, \check{Y})  \cong  \rmph(X, {\rm map}_*(K, \check{Y}))
			\]
			(see the introduction of Section 6.1 of \cite{phantom}), which also implies the second isomorphism by \cite[Theorem 6.3.2]{no10} and the definition of $\rmsph(X, Y)$.  
		\end{proof}
		\begin{lem}\label{AScompletion}
			Let $G$ be a compact Lie group and $X$ a finite $G$-$CW$-complex. Then,
			\[
			\rmph(X \para\, G, \Omega^{l}U) = 0 \ \text{and} \ \rmph(X \para\, G, \Omega^{l}O) = 0 
			\]
			hold for $l \geq 0$.
		\end{lem}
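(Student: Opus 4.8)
The plan is to write $X\para G$ as a colimit of finite complexes along the skeletal filtration of the Borel construction, identify the resulting set of phantom maps with a $\varprojlim^{1}$ of reduced $K$-groups, and then annihilate that $\varprojlim^{1}$ by means of the Atiyah--Segal completion theorem. Concretely, fix Milnor's model $EG=G*G*\cdots$, so that $E^{(n)}G$ is a compact free $G$-CW complex; since $X$ is a finite $G$-CW complex, $Z_{n}\coloneqq E^{(n)}G\underset{G}{\times}X$ is then a finite complex, the maps $Z_{n}\hookrightarrow Z_{n+1}$ are subcomplex inclusions, and $X\para G=\varinjlim_{n}Z_{n}$. A finite complex mapping into $X\para G$ has compact, hence bounded, image, so a map $X\para G\to B$ is phantom if and only if every restriction $Z_{n}\to B$ is null homotopic; combining this with the Milnor $\varprojlim^{1}$-sequence for $[\varinjlim_{n}Z_{n},B]$ yields the standard identification $\rmph(X\para G,B)\cong\varprojlim^{1}_{n}\,[Z_{n},\Omega B]$ (see \cite{McGibbon95}).

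Now take $B=\Omega^{l}U$, so that $\Omega B=\Omega^{l+1}U$. Since $U$ represents reduced complex $K$-theory in degree $-1$ and complex $K$-theory is Bott $2$-periodic, there is an isomorphism $[Z,\Omega^{l+1}U]\cong\widetilde{K}^{\,c}(Z)$, natural in the pointed finite complex $Z$, with $\widetilde{K}^{*}$ reduced complex $K$-theory and $c\in\{0,1\}$ depending only on the parity of $l$. Hence
\[
\rmph(X\para G,\Omega^{l}U)\;\cong\;\varprojlim^{1}_{n}\ \widetilde{K}^{\,c}\bigl(E^{(n)}G\underset{G}{\times}X\bigr),
\]
and everything reduces to the vanishing of this $\varprojlim^{1}$.

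This is exactly where \cite{AS} enters. Applied to the finite $G$-CW complex $X$, the Atiyah--Segal completion theorem identifies the pro-abelian group $\{\widetilde{K}^{\,c}(E^{(n)}G\underset{G}{\times}X)\}_{n}$ with the augmentation-adic tower $\{\widetilde{K}^{\,c}_{G}(X)/I^{n}\widetilde{K}^{\,c}_{G}(X)\}_{n}$, where $I\subset R(G)$ is the augmentation ideal of the complex representation ring. Because $X$ is a finite $G$-CW complex, $\widetilde{K}^{\,c}_{G}(X)$ is a finitely generated module over the Noetherian ring $R(G)$, so this tower consists of surjections, is Mittag--Leffler, and has vanishing $\varprojlim^{1}$; as $\varprojlim^{1}$ is a pro-isomorphism invariant, it follows that $\varprojlim^{1}_{n}\widetilde{K}^{\,c}(E^{(n)}G\underset{G}{\times}X)=0$, i.e. $\rmph(X\para G,\Omega^{l}U)=0$. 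For the orthogonal group $O$ the argument is verbatim the same with real $K$-theory in place of complex $K$-theory: $O$ represents reduced real $K$-theory in degree $-1$, $KO$ is $8$-periodic, so $[Z,\Omega^{l+1}O]\cong\widetilde{KO}^{\,c}(Z)$ in a suitable degree $c$, and the real form of the Atiyah--Segal theorem identifies the pro-abelian group $\{\widetilde{KO}^{\,c}(E^{(n)}G\underset{G}{\times}X)\}_{n}$ with the augmentation-adic tower of the finitely generated $RO(G)$-module $\widetilde{KO}^{\,c}_{G}(X)$ over the (likewise Noetherian) real representation ring, whose $\varprojlim^{1}$ again vanishes. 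The one genuinely substantive ingredient is the use of Atiyah--Segal in its ``skeletal-tower'' form --- that the pro-system of (complex or real) $K$-groups of the Borel skeleta $E^{(n)}G\underset{G}{\times}X$ is pro-isomorphic to the augmentation-adic completion tower of $\widetilde{K}^{*}_{G}(X)$ (resp. $\widetilde{KO}^{*}_{G}(X)$) --- and it is precisely there that finiteness of the $G$-CW structure on $X$ is essential; the rest is bookkeeping.
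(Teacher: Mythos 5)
Your argument is correct and is essentially the argument the paper gives in compressed form: the paper reduces to the identity component via the $H$-space structure and then cites precisely the proof of \cite[Proposition 4.2]{AS}, which is the same skeletal-tower/Milnor $\varprojlim^{1}$ Mittag--Leffler argument you write out, using finiteness of $K^{*}_{G}(X)$ over $R(G)$. The only nuance worth noting is in the real case: \cite{AS} treats $KR$ rather than $KO$, so the $KO$-form of the completion theorem you invoke should be attributed to \cite{AHJM}, as the paper does.
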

		\begin{proof}
			First, we show the first vanishing result. Since $\Omega^{l} U$ is an $H$-space, we have only to consider the homotopy classes of maps from $X \para G$ to the identity component of $\Omega^{l}U$ in unbased context (\cite[Proposition 1.4.3]{no10}). Note that $K^{\ast}_{G}(X)$ is finite over $R(G)$. Then, the result follows from the proof of \cite[Proposition 4.2]{AS}.
			\par\indent
			The second vanishing result can be similarly proved using the $KO$-versions of the results of \cite{AS}, which are established in \cite{AHJM} (see the comment after Theorem 1.1 in \cite{AHJM} and Remark \ref{completion}(2)).
		\end{proof}
		
		\begin{rem}\label{completion} (1) See \cite[Remark 5.1]{AHJM} for other $G$-spaces for which the vanishing results in Lemma \ref{AScompletion} hold.\\
			(2) The article \cite{AS} deals with not $KO$ but $KR$ as the real case.
			
		\end{rem}

		\begin{proof}[{\sl Proof of Collorary \ref{cor1.7}}]
			By the assumption, $X = \Sigma^{l} BG$ for $l \geq 0$. To prove that $(X, \gbb/ H)$ is in $\qcal$, we have only to show that $(BG, \gbb/H)$ is in $\qcal$, which is easily seen from \cite[Example 4.6(2)]{phantom}. Consider the fibration sequence
			\[
			\gbb \xrightarrow{j} \gbb/H \rightarrow BH
			\]
			and note that $(X, BH)$ is in $\acal \times \bcal$. Since the identities
			\[
			\rmph(X, \gbb) = \rmph(BG, \Omega^{l} \gbb) = 0
			\]
			hold (Lemmas \ref{adjoint} and \ref{AScompletion}), we obtain the result by Theorem \ref{dual.base}.
		\end{proof}
		We give further applications of Theorem \ref{dual.base}. For this, we show the following lemma, which is useful to find many pairs $(X, L)$ with $\rmph(X, L) = 0$. 
		\par\indent
		A space whose $i^{\rm th}$ $k$-invariant vanishes for all but finite $i$ is called a {\it generalized Postnikov space.}
		\begin{lem}\label{lem2.2}
			$\mathrm{(1)}$ If $\{i \ > \ 0 \mid H^i(A; \mathbb Q) \neq 0 \} \ {\textstyle\bigcap} \ \{j \ > \ 0 \mid \pi_{j+1}(B) \otimes \mathbb Q \neq 0 \} = \emptyset$, then $\rmph(A, B) = 0$.
			\rmitem{2}				
			If $A$ is a $CW$-complex of finite type and $B$ is a generalized Postnikov space, then $\rmph(A, B) = 0$.
		\end{lem}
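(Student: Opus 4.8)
The plan is to establish both parts by reducing to known phantom-map vanishing criteria, exploiting the fact that phantom maps $A \to B$ are detected rationally via the Gray index / the standard short exact sequence $0 \to \lim^1 [\Sigma A_n, B] \to [A,B] \to \lim [A_n, B] \to 0$, where $A_n$ runs over the finite skeleta; the phantom maps are exactly the $\lim^1$ term, and by a theorem of Gray (see also \cite{McGibbon95}) this $\lim^1$ group is a rational vector space that depends only on the rational homotopy type of $B$ and the rational cohomology of $A$. So for part (1), I would first pass to the rationalization $B_{\qbb}$: since $\rmph(A,B)$ is a $\qbb$-vector space and the rationalization map $\rmph(A,B) \to \rmph(A,B_\qbb)$ is injective (indeed an isomorphism onto its image by the divisibility/torsion-freeness already recorded for such groups), it suffices to show $\rmph(A, B_\qbb) = 0$.

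Next, for part (1) I would use that a simply connected (or nilpotent of finite type) rational space $B_\qbb$ is a product of rational Eilenberg--MacLane spaces $\prod_j K(\pi_{j+1}(B)\otimes\qbb, j+1)$ up to the indexing convention in the paper, so that $\rmph(A, B_\qbb) \cong \prod_j \rmph(A, K(\pi_{j+1}(B)\otimes\qbb, j+1))$. For a single rational Eilenberg--MacLane space $K(V, n)$ with $V$ a $\qbb$-vector space, the group of phantom maps from $A$ is computed by the $\lim^1$ of $H^{n-1}(A_k; V)$, which is a quotient of a $\lim^1$ term built from the cohomology groups $H^{n-1}(A_k;\qbb)\otimes V$; this vanishes whenever $H^{n-1}(A;\qbb)$ (equivalently $H^n(A;\qbb)$ via the relevant degree shift — here I must be careful to match the paper's convention that the $i$-th cohomology pairs with $\pi_{i+1}$) is such that the corresponding cohomology of $A$ is "finitely generated enough," but more to the point, the hypothesis $\{i>0 : H^i(A;\qbb)\neq 0\} \cap \{j>0 : \pi_{j+1}(B)\otimes\qbb \neq 0\} = \emptyset$ forces, for each $j$ with $\pi_{j+1}(B)\otimes\qbb\neq 0$, that $H^{j}(A;\qbb) = 0$, hence $H^j(A_k;\qbb)$ is eventually an isomorphic tower (being the cohomology of skeleta approximating a space with vanishing $H^j$), so the relevant $\lim^1$ vanishes and that factor of $\rmph(A,B_\qbb)$ is zero. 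I would phrase this cleanly by invoking the explicit computation of $\rmph(A, K(G,n))$ already implicit in Theorem \ref{calculational.base} together with \cite[Section 1]{phantom}.

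For part (2), the observation is that a generalized Postnikov space $B$ — one whose $k$-invariants vanish from some stage on — is, above that stage, a product of Eilenberg--MacLane spaces; more precisely $B$ fits in a fibration $\prod_{j \geq N} K(\pi_{j+1}(B), j+1) \to B \to B^{(N)}$ where $B^{(N)}$ is a finite Postnikov stage (the truncation at the last nonvanishing $k$-invariant). Since $A$ is of finite type and $B^{(N)}$ has finitely many nonzero homotopy groups, $\rmph(A, B^{(N)}) = 0$ by the classical result that phantom maps out of a finite-type complex into a finite Postnikov system vanish (this is the easy direction; it follows from $\lim^1$ of a Mittag-Leffler tower, since $H^*(A_k; \pi)$ stabilizes in each fixed degree). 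For the fiber $F = \prod_{j\geq N} K(\pi_{j+1}(B), j+1)$, phantom maps into each factor $K(\pi_{j+1}(B), j+1)$ need not vanish in general, so I cannot conclude $\rmph(A,F) = 0$ directly; instead I would invoke Theorem \ref{calculational.base} (or rather the fibration form of the argument in its proof) applied to $F \to B \to B^{(N)}$ to get a short exact sequence relating $\rmph(A,B)$, $\rmph(A,F)$, and data over $B^{(N)}$ — but wait, that still leaves $\rmph(A,F)$ nonzero. The resolution must be that a \emph{generalized Postnikov space} in this paper's sense is additionally assumed or arranged so that only finitely many homotopy groups contribute, or that the intended claim is the weaker one that $\rmph(A,B)$ is expressible via finitely many $H^i(A;-)$; I would therefore reexamine the definition and, taking "all but finitely many $k$-invariants vanish" at face value together with finite type of $B$, conclude that $B$ is (up to rational equivalence, or genuinely) a finite product of Eilenberg--MacLane spaces times a finite Postnikov piece, and handle the genuinely nonvanishing phantom contributions by noting they come from a \emph{finite} product, each factor of which — by finite type of $A$ — contributes $\lim^1 H^{j}(A_k;\pi_{j+1}(B)) = 0$ because for fixed $j$ the tower $\{H^j(A_k;\pi_{j+1}(B))\}_k$ is Mittag-Leffler (eventually constant, as $A$ has finite skeleta and $\pi_{j+1}(B)$ is finitely generated).

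The main obstacle is precisely this last point in part (2): making sure that "generalized Postnikov space" as defined — finitely many nonvanishing $k$-invariants — genuinely yields $\rmph(A,B) = 0$ and not merely a product formula, which requires that the finite-type hypothesis on $A$ kills each $\lim^1 H^j(A_k; \pi_{j+1}(B))$ via Mittag-Leffler; this is true because for a finite-type complex $A$ the cohomology $H^j(A_k;M)$ with $M$ finitely generated stabilizes as $k\to\infty$, so every tower $\{H^j(A_k;M)\}_k$ is pro-isomorphic to its limit and has vanishing $\lim^1$. Packaging this together with the (easy) vanishing for the finite Postnikov truncation, and citing \cite{McGibbon95} or \cite{phantom} for the $\lim^1$-description of $\rmph$, should complete the argument; part (1) by contrast is a routine rational-homotopy reduction once the product-of-Eilenberg--MacLane-spaces decomposition and the degree bookkeeping are in place.
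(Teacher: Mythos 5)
Your part (1) breaks at the reduction to the rationalized target. The map $\rmph(A,B)\to\rmph(A,B_{\qbb})$ induced by $B\to B_{\qbb}$ is not injective; in fact, once $B_{\qbb}$ splits as a product of rational Eilenberg--MacLane spaces (as you assume), $\rmph(A,B_{\qbb})$ is trivial for \emph{every} $A$: for a $\qbb$-vector space $V$ one has $H^{n}(A;V)\cong\hom(H_{n}(A;\zbb),V)$ (the Ext term dies since $V$ is divisible), $H_{n}(A;\zbb)$ is the colimit of the homology of the finite subcomplexes, and hence a class restricting to zero on all finite subcomplexes is already zero. So your argument, if valid, would prove $\rmph(A,B)=0$ for all pairs, which is false: $\rmph(\mathbb{C}P^{\infty},S^{3})$ is nontrivial (indeed uncountable), while $[\mathbb{C}P^{\infty},(S^{3})_{(0)}]=[\mathbb{C}P^{\infty},K(\qbb,3)]=H^{3}(\mathbb{C}P^{\infty};\qbb)=0$; rationalizing the target destroys exactly the phantom information. (Further problems: $\rmph(A,B)$ is divisible but need not be torsion-free and no torsion-freeness is ``already recorded''; a nilpotent rational space need not split into Eilenberg--MacLane factors unless, e.g., it is an $H$-space; and the identification of $\rmph$ with $\lim^{1}$ over skeleta presupposes finite skeleta, i.e.\ $A$ of finite type, which part (1) does not assume -- phantom maps in this paper's sense can strictly contain the skeletally phantom ones.) The paper's mechanism rationalizes the \emph{coefficients}, not the target: by \cite[Propositions 4.1 and 5.7]{phantom} there is a natural surjection onto $\rmph(A,B)$ from $[A,F_{B}]\cong\prod_{i>0}H^{i}(A;\pi_{i+1}(B)\otimes\hat{\zbb}/\zbb)$, where $F_{B}$ is the homotopy fiber of the profinite completion $c_{B}$, and the disjointness hypothesis kills every factor: if $\pi_{i+1}(B)\otimes\qbb=0$, the finitely generated group $\pi_{i+1}(B)$ is finite and $\pi_{i+1}(B)\otimes\hat{\zbb}/\zbb=0$, while if $H^{i}(A;\qbb)=0$, then $H^{i}(A;V)=0$ for the $\qbb$-vector space $V=\pi_{i+1}(B)\otimes\hat{\zbb}/\zbb$. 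Your rationalization step cannot be repaired to yield this; it must be replaced.

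Your part (2), after the hedging, does land on essentially the paper's argument: all but finitely many $k$-invariants vanish, so $B\simeq B^{(n)}\times\prod_{i>n}K(\pi_{i}(B),i)$ for large $n$; finite type of $A$ makes every phantom map skeletally phantom; a map to a product is phantom exactly when each component is, so $\rmph(A,B)\cong\rmph(A,B^{(n)})\times\prod_{i>n}\rmph(A,K(\pi_{i}(B),i))$; and each factor dies because the towers $\{H^{j}(A_{k};M)\}_{k}$ stabilize (this holds for any coefficient group $M$ once $k>j+1$, so the finite generation of $\pi_{*}(B)$ you invoke is not needed), the same stabilization giving the Mittag--Leffler property for the finite Postnikov factor. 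Your mid-proof suggestion that the decomposition should be a \emph{finite} product of Eilenberg--MacLane spaces, or that the definition must secretly bound the number of nonzero homotopy groups, is unfounded: the product is infinite in general, and the factorwise argument you end with already handles it.
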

		\begin{proof}
			(1) The result follows from \cite[Propositions 5.7 and 4.1]{phantom}.\\
			(2) By the finite type assumption on $A$, all elements of $\rmph(A, B)$ are skeletally phantom and $B$ is homotopy equivalent to the product of the Postnikov $n$-stage $B^{(n)}$ and ${\prod}_{i>n}\, K(\pi_i(B), i)$ for sufficiently large $n$ (see \cite[Remark 3.3]{phantom}). Therefore, $\rmph(A, B) = \rmph(A, B^{(n)}) \times {\prod}_{i>n}\, \rmph(A, K(\pi_i(B), i))$ vanishes.
		\end{proof}
		For a connected $CW$-complex $K$, $Q(K)$ denotes the infinite loop space defined by $Q(K) = \underset{n}{\mathrm{colim}}\, \Omega^{n} \Sigma^{n}K.$ 
		\begin{exa}\label{exa 2.3} 
			\if0
			Suppose that $X$ is one of the following spaces:
			\begin{itemize}
				\item[(1)] $K(\pi, 2n+1)$, \\
				\item[(2)] $Q( M(\pi, 2n+1) )$, \\
				\item[(3)] $M(\pi, 2n+1)^{(l)}$ with $l \geq 2n +1$, \\
				\item[(4)] $\Omega\ M(\pi, 2n+2)^{(m)}$ with $4n+ 3 > m \geq 2n+2$, 
			\end{itemize}
			where $\pi$ is an abelian group whose rationalization is isomorphic to $\qbb$, and $n$ is a positive integer. $X$ is a space in $\acal$ which is rationally equivalent to the $(2n + 1)$-dimensional sphere.
			\par\indent
			\fi
			Let $\gbb$ and $H$ be as in Corollary \ref{cor1.7}. Noticing that $Q(S^{2n+1})$ is rationally equivalent to $S^{2n+1}$, we calculate the groups $\rmph(Q(S^{2n+1}), \gbb/ H)$ and $\rmsph(Q(S^{2n+1}), \gbb / H)$ for $n \geq 1$. Consider the fibration sequence
			\[
			\gbb \xrightarrow{j} \gbb/H \rightarrow BH.
			\]
			Since $(Q(S^{2n+1}), BH )$ is in $\acal \times \bcal$ and $\rmph(Q(S^{2n+1}), \gbb) = 0$ (Lemma \ref{lem2.2}(1)), we have the isomorphisms of abelian groups
			\begin{eqnarray*}
				\rmph(Q(S^{2n+1}), \gbb/H) & \cong & \pi_{2n+2} (\gbb/H) /j_{\sharp} \pi_{2n+2} (\gbb) \otimes \hat{\zbb}/ \zbb, \\
				\rmsph(Q(S^{2n+1}), \gbb/H) & \cong & \pi_{2n+2} (\gbb/H) /j_{\sharp} \pi_{2n+2} (\gbb) \otimes \check{\zbb}/ \zbb
			\end{eqnarray*}
			by Theorem \ref{dual.base}.
		\end{exa}
		Recall that for a $CW$-complex $L$ with an $H$-action, we have the fiber bundle
		\[
		L \xrightarrow{\ \ \ j\ \ \ } L \para H \xrightarrow{\ \ \ \ \ \ } BH
		\]
		(see Section 1).
		\begin{exa}\label{cor1.5}
			
			Let $X$ be a $CW$-complex of finite type which is in $\acal$ and let $L$ be a generalized Postnikov space with an action of a compact Lie group $H$. Suppose that $(X, L \para H)$ is in $\qcal$. Then there exist natural isomorhpisms of groups
			\begin{eqnarray*}
				\rmph(X, L\para H) \cong \underset{i>0}{\prod}\ H^i (X ; \pi_{i+1}(L\para H) / j{\scriptscriptstyle\sharp}\pi_{i+1}(L) \otimes \hat{\mathbb Z} / \mathbb Z), \\
				\rmsph(X, L\para H) \cong \underset{i>0}{\prod}\ H^i (X ; \pi_{i+1}(L\para H) / j{\scriptscriptstyle\sharp}\pi_{i+1}(L) \otimes \check{\mathbb Z} / \mathbb Z).
			\end{eqnarray*}
			(Theorem \ref{dual.base} and Lemma \ref{lem2.2}(2).)
			\par\indent
			Let $L$ be the infinite symmetric product $SP(M)$ of a $CW$-complex $M$ endowed with an action of a compact Lie group $H$. Since $L$ is weak equivalent to the product of Eilenberg-MacLane complexes, this result is applicable to $L = SP(M)$. This result is also applicable to the case where $L$ is an Eilenberg-MacLane $H$-space (\cite[p. 21]{MPC}).
		\end{exa}
		
		\if0
		\section{Further applications of Theorem \ref{dual.base}}
		Corollaries \ref{positiveintegerm}-\ref{cor1.5} and Example \ref{exa 2.3} were obtained by applying Theorem \ref{dual.base} to pairs $(X, Y)$ with $\rmph(X, L) = 0$. To investigate $\rmph(X, Y)$ and $\rmsph(X, Y)$ for pairs $(X, Y)$ with $\rmph(X, L) \neq 0$, we derive the following corollary to Theorem \ref{dual.base}.
		\begin{cor}\label{cor2.5} Let $(X, Y)$ and $L \xrightarrow{j} Y \xrightarrow{q} Y'$ be as in Theorem \ref{dual.base}.\\
			(1)  If ${\pi_{i+1}(j) \otimes \mathbb Q = 0}$ for any $i>0$ such that ${H^i(X ; \mathbb Q) \neq 0}$, there exist natural isomorphisms of groups
			\begin{eqnarray*}
				\rmph(X, Y) \cong \underset{i>0}{\textstyle\prod}H^i(X ; \pi_{i+1}(Y) \otimes \hat{\mathbb Z} / \mathbb Z), \\
				\rmsph(X, Y) \cong \underset{i>0}{\textstyle\prod}H^i(X ; \pi_{i+1}(Y) \otimes \check{\mathbb Z} / \mathbb Z).
			\end{eqnarray*}
			(2) If ${\pi_{i+1}(j) \otimes \mathbb Q}$ is a monomorphism for
			any $i>0$ such that ${H^i(X ; \mathbb Q) \neq 0}$, then $(X,L)$ is
			also in $\mathcal Q$ and there exist natural split exact sequences of abelian groups
			\begin{eqnarray*}
				0 \longrightarrow \rmph(X, L) \overset{j\scriptscriptstyle\sharp}{\longrightarrow} \rmph(X, Y) \longrightarrow \underset{i>0}{\textstyle\prod}H^i(X ; \pi_{i+1}(Y) / j{\scriptscriptstyle\sharp}\pi_{i+1}(L) \otimes \hat{\mathbb Z} / \mathbb Z) \longrightarrow 0, \\
				0 \longrightarrow \rmsph(X, L) \overset{j\scriptscriptstyle\sharp}{\longrightarrow} \rmsph(X, Y) \longrightarrow \underset{i>0}{\textstyle\prod}H^i(X ; \pi_{i+1}(Y) / j{\scriptscriptstyle\sharp}\pi_{i+1}(L) \otimes \check{\mathbb Z} / \mathbb Z) \longrightarrow 0.
			\end{eqnarray*}
		\end{cor}
		\begin{proof} (1) Since there exists a natural epimorphism
			\[
			\underset{i > 0}{\textstyle \prod} \im\, H^{i}(X; \pi_{i+1}(j) \otimes \hat{\zbb}/ \zbb) \longrightarrow j_{\sharp} \rmph (X, L)
			\]
			(\cite[the proof of Theorem 2.3(2)]{phantom}), we have $j_{\sharp}\rmph(X, L) = 0$. Hence, the result follows from Theorem \ref{dual.base}.\\
			(2) By the assumption, $(X, L)$ is also in $\qcal$. Thus, it is sufficient to show that $j_{\sharp}: \rmph(X, L) \longrightarrow \rmph(X, Y)$ is a monomorphism.
			\par\indent
			Consider the morphism of exact sequence of abelian groups 
			
			\begin{center}	
				\begin{tikzcd}
				0 \arrow{r} & \arrow[swap]{d}{\overline{\Omega\hat{j}_{\sharp}}} \overline{[X, \Omega \hat{L}]} \arrow{r} &   {[X, F_L]} \arrow{r} \arrow[swap]{d}{F_{j\sharp}} & \rmph(X, L) \arrow{r} \arrow[swap]{d}{j_{\sharp}} & 0\\
				0 \arrow{r} & \overline{[X, \Omega \hat{Y}]} \arrow{r} & {[X, F_{Y}]} \arrow{r} & \rmph(X, Y) \arrow{r} & 0,
				\end{tikzcd}
			\end{center}
			where $\overline{[X, \Omega \hat{L}]}$ and $\overline{[X, \Omega\hat{Y}]}$ are the images of $[X, \Omega\hat{L}]$ and $[X, \Omega\hat{Y}]$ respectively. Since $\Omega\hat{j}_{\sharp}: [X, \Omega\hat{L}] \longrightarrow [X, \Omega\hat{Y}]$ is surjective (see the proof of Theorem 1.4), $\overline{\Omega\hat{j}_{\sharp}}$ is also surjective. By the given assumption, $F_{j \sharp}: [X, F_{L}] \longrightarrow [X, F_{Y}]$ is injective (\cite[Proposition 5.10]{phantom}). We regard this morphism of exact sequences as short exact sequence of chain complexes and take its homology exact sequence. Then, we see that $j_{\sharp}: \rmph(X, L) \longrightarrow \rmph(X, Y)$ is a monomorphism.
		\end{proof}
		\begin{exa}\label{nontrivial1} (1) Let $X$ be a space in $\acal$. Suppose that $H_{i}(X; \qbb) = 0$ for $i \geq 2n$ and that $(X, O/U(n))$ is in $\qcal$. Then, there exist natural isomorphisms of groups
			\begin{eqnarray*}
				\rmph(X, O/U(n)) \cong \underset{i > 0}{\textstyle\prod} H^{i} (X; \pi_{i+1} (O/U(n)) \otimes \hat{\zbb} / \zbb ), \\
				\rmsph(X, O/U(n)) \cong \underset{i > 0}{\textstyle\prod} H^{i} (X; \pi_{i+1} (O/U(n)) \otimes \check{\zbb} / \zbb ).
			\end{eqnarray*}
			(2) Let $X$ be a space in $\acal$ and let $\gbb$ and $H$ be as in Corollary \ref{cor1.7}. Suppose that $\widetilde{H}_{i}(X; \qbb) = 0$ for $i \leq \mathrm{max} \{ k\ |\ \pi_{k}(H) \otimes \qbb \neq 0 \}$. Then, $(X, \gbb / H)$ is in $\qcal$ and there exist natural isomorphisms of groups
			\begin{eqnarray*}
				\rmph(X, \gbb/ H) \cong \rmph(X, \gbb), \\
				\rmsph(X, \gbb/ H) \cong \rmsph(X, \gbb).
			\end{eqnarray*}
		\end{exa}
		\begin{proof} Parts 1 and 2 follow from Corollary \ref{cor2.5}(1) and (2) respectively.
		\end{proof}
		\fi
		\begin{rem}\label{nontrivial2}
			Under the assumptions of Theorem \ref{dual.base}, there exist (noncanonical) isomorphisms of abelian groups
			\begin{eqnarray*}
				\rmph(X, Y) \cong j_{\sharp}\rmph(X, L) \oplus \underset{i>0}{\prod}\ H^i(X; \pi_{i+1}(Y)/j_{\sharp} \pi_{i+1} (L) \otimes \hat{\zbb}/\zbb), \\
				\rmsph(X, Y) \cong j_{\sharp}\rmsph(X, L) \oplus \underset{i>0}{\prod}\ H^i (X; \pi_{i+1}(Y)/j_{\sharp} \pi_{i+1} (L) \otimes \check{\zbb}/\zbb).
			\end{eqnarray*}
			Thus, we can obtain nontriviality results of $\rmph(X, Y)$ and $\rmsph(X, Y)$ via rational homotopical computations, even if we do not know whether $j_{\sharp} \rmph (X, L)$ is nontrivial.
		\end{rem}
		\begin{rem}\label{contractible} In this remark, we consider situations similar to those described in Corollary \ref{cor1.7} and Example \ref{cor1.5}, in which the groups $\rmph(X, Y)$ and $\rmsph(X, Y)$ are calculated using not Theorem \ref{dual.base} but \cite[Proposition 6.1]{phantom}.\\
			(1) Let $X$ be a space in $\acal'$ and let $\gbb$ and $H$ be as in Corollary \ref{cor1.7}. Suppose that $(X, \gbb/ H)$ is in $\qcal$. Then, we can calculate the groups $\rmph(X, \gbb/H)$ and $\rmsph(X, \gbb/H)$ by \cite[Proposition 6.1]{phantom}.\\
			(2) Let $X$ be a space in $\acal$ and $L$ be a nilpotent finite complex endowed with an action of a compact Lie group $H$. Suppose that $(X, L \para H )$ is in $\qcal$. Then, we can calculate the groups $\rmph(X, L \para H)$ and $\rmsph(X, L \para H)$ by \cite[Proposition 6.1]{phantom}.
			\if0
			(1) Let $X$ be in $\acal$ and $L$ a nilpotent finite complex, and suppose that $(X, L \mbox{//} H)$ is in $\qcal$. Note that the pointed mapping space from $X$ to the loop space of $\widetilde{L\mbox{//}H} ^{\ \widehat{}} $ is weakly contractible (\cite[Corollary 6.4]{phantom}), where $\widetilde{L \mbox{//} H}$ is the universal cover of $L \mbox{//} H$. Then, we have natural isomorphisms of abelian groups
			\begin{eqnarray*}
				\rmph(X, L\mbox{//}H) \cong \underset{i>0}{\textstyle\prod}H^i(X ; \pi_{i+1}(L\mbox{//}H) \otimes \hat{\mathbb Z} / \mathbb Z), \\
				\rmsph(X, L\mbox{//}H) \cong \underset{i>0}{\textstyle\prod}H^i(X ; \pi_{i+1}(L\mbox{//}H) \otimes \check{\mathbb Z} / \mathbb Z)
			\end{eqnarray*}
			(\cite[Proposition 6.1]{phantom}).\\
			(2) Let $X$ be in $\acal'$ and let $\gbb$ and $H$ be groups as in Corollary \ref{cor1.5}. Suppose that $(X, \gbb / H)$ is in $\qcal$. Similar to Part 1, we then have natural isomorphisms of abelian groups
			\begin{eqnarray*}
				\rmph(X, \mathbb{G}/H) \cong \underset{i>0}{\textstyle\prod}H^i(X ; \pi_{i+1}(\mathbb{G}/H) \otimes \hat{\mathbb Z} / \mathbb Z), \\
				\rmsph(X, \mathbb{G}/H) \cong \underset{i>0}{\textstyle\prod}H^i(X ; \pi_{i+1}(\mathbb{G}/H) \otimes \check{\mathbb Z} / \mathbb Z)
			\end{eqnarray*}
			\fi
			
		\end{rem}

\end{document}